    \def\qed{\hfill$\sqcap\kern-8.0pt\hbox{$\sqcup$}$\\}
    \def\re{\textnormal {Re}}
    \def\im{\textnormal {Im}}
    \def\i{{\textnormal i}}
	\newtheorem{theorem}{Theorem}
	\newtheorem{proposition}{Proposition}
	\theoremstyle{definition}
	\newtheorem{remark}{Remark}
\title{On the Barnes double gamma function}
\author{ 
{Shahen Alexanian, Alexey Kuznetsov\footnote{Dept. of Mathematics and Statistics,  York University,
4700 Keele Street, Toronto, ON, M3J 1P3, Canada.   Email: akuznets@yorku.ca}}}
\date{\today}
\begin{document}
%**************************************************************************************************
%**************************************************************************************************
%**************************************************************************************************
\maketitle

\begin{abstract}
We aim to achieve the following three goals. First of all, we collect all known definitions, transformation properties and functional identities of Barnes double gamma function $G(z;\tau)$. Second, we derive an algorithm for numerically computing the double gamma function and present its complete asymptotic expansion as $z\to \infty$. Third, we derive some new properties, including a new product identity and  new representations of the gamma modular forms $C(\tau)$ and $D(\tau)$. 
\end{abstract}
{\vskip 0.15cm}
 \noindent {\it Keywords}: Barnes double gamma function, asymptotic expansion, Bernoulli numbers
{\vskip 0.25cm}
 \noindent {\it 2010 Mathematics Subject Classification }: Primary 33B15, Secondary  30E15, 33F05

%**************************************************************************************************
%**************************************************************************************************
%**************************************************************************************************

\section{Introduction}\label{section_Introduction}

%**************************************************************************************************
%**************************************************************************************************
%**************************************************************************************************

The double gamma function $G(z;\tau)$ was introduced and thoroughly studied by Barnes in 1899   \cite{Barnes1899}. It seems that this function was studied previously by Alexeiewski and other researchers:  Barnes even calls it {\it Alexeiewski's function} in \cite{Barnes1899} and in \textsection 26 in \cite{Barnes1901}. The modern convention is to call $G(z;\tau)$ the double gamma function or the Barnes double gamma function. In the last three decades this special function has appeared more and more frequently in solutions to problems in probability \cite{Kuznetsov2011,KuznetsovPardo2013,Yor2009,Simon2018}, number theory \cite{Keating2000,Shintani1980} and physics \cite{Lawrie1994} and it has been a fertile object of study in its own right for its many interesting properties and connections to other special functions \cite{Michitomo_2001,RUIJSENAARS2000107}.

We will present two definitions of the double gamma function $G(z;\tau)$ as infinite products. First, for
$\tau \in {\mathbb C} \setminus (-\infty,0]$ we define $C(\tau)$ and $D(\tau)$ through the following   asymptotic expansions as $m\to +\infty$
\begin{align}
\label{def_C}
C(\tau)&:=\sum\limits_{k=1}^{m-1}\psi(k\tau)+\frac12 \psi(m\tau)-\frac{1}{\tau}\ln\left(\frac{\Gamma(m\tau)}{\sqrt{2\pi}}\right)-\frac{\tau}{12}\psi'(m\tau) \\ 
\nonumber
&+\frac{\tau^3}{720}\psi^{(3)}(m\tau)-\frac{\tau^5}{30240}
\psi^{(5)}(m\tau)+\frac{\tau^7}{1209600}
\psi^{(7)}(m\tau)+O(m^{-9}), \\
\label{def_D}
D(\tau)&:=\sum\limits_{k=1}^{m-1}\psi'(k\tau)+\frac12 \psi'(m\tau)-\frac{1}{\tau}\psi(m\tau)-\frac{\tau}{12}\psi''(m\tau) \\
\nonumber
&+\frac{\tau^3}{720}\psi^{(4)}(m\tau)-\frac{\tau^5}{30240}
\psi^{(6)}(m\tau)+\frac{\tau^7}{1209600}
\psi^{(8)}(m\tau)+O(m^{-10}).
\end{align}
Here $\psi(z)=\frac{d}{d z}\ln(\Gamma(z))$ is the digamma function (see \cite{Jeffrey2007}).  We would like to emphasize that  the above equations 
\eqref{def_C} and \eqref{def_D} serve two purposes: they give us definitions of $C(\tau)$ and $D(\tau)$ if we take a limit $m\to +\infty$ of the right-hand side of each equation  and they also provide a practical way to compute $C(\tau)$ and $D(\tau)$ if we take $m$ a large positive integer.
The asymptotic expansions \eqref{def_C} and \eqref{def_D} can be found on pages 362-363 in \cite{Barnes1899} (the higher order terms presented here were obtained by Euler-Maclaurin summation -- these are useful when computing $C(\tau)$ and $D(\tau)$ numerically). Barnes \cite{Barnes1899} calls these two functions $C(\tau)$
and $D(\tau)$ {\it the gamma modular forms}, and we will follow this convention.  Next, we define 
\begin{equation}\label{def_a_b}
a(\tau):=-\gamma \tau+\frac{\tau}2 \ln(2\pi \tau)+\frac12 \ln(\tau)-\tau C(\tau), \;\; 
b(\tau):=-\frac{\pi^2 \tau^2}{6} -\tau \ln(\tau)-\tau^2 D(\tau), 
\end{equation} where $\gamma=-\psi(1)=0.577215...$ is the Euler-Mascheroni constant.
Now we can define the double gamma function $G(z;\tau)$ (for $\tau \in {\mathbb C} \setminus (-\infty,0]$ and $z\in {\mathbb C}$) as an entire function of $z$ of order two having the Weierstrass infinite product representation: 
\begin{equation}\label{Weierstrass_G}
G(z;\tau):=\frac{z}{\tau} e^{a(\tau)\frac{z}{\tau}+b(\tau)\frac{z^2}{2\tau}} \prod\limits_{m\ge 0} \prod\limits_{n\ge 0} {}^{'}
\left(1+\frac{z}{m\tau+n} \right)e^{-\frac{z}{m\tau+n}+\frac{z^2}{2(m\tau+n)^2}},
\end{equation}
where the prime in the second product means that the term corresponding to $m=n=0$ is omitted. 
 According to the definition \eqref{Weierstrass_G}, it is clear that the function $G(z;\tau)$  has zeros on the lattice $m\tau+n$, $m\le 0$, $n\le 0$ (we note that the zeros are all simple if and only if $\tau$ is not rational). 

The second definition of $G(z;\tau)$ is through a single infinite product involving the classical gamma function: 
\begin{equation}\label{eq_G2_inf_prod_Gamma}
G(z;\tau)=\frac{1}{\tau \Gamma(z)} e^{\tilde a(\tau) \frac{z}{\tau}+\tilde b(\tau)\frac{z^2}{2\tau^2}} 
\prod\limits_{m\ge 1} \frac{\Gamma(m\tau)}{\Gamma(z+m\tau)} e^{z\psi(m\tau)+\frac{z^2}2 \psi'(m\tau)},
\end{equation}
where we denoted 
\begin{equation}\label{def_tilde_a_tilde_b}
\tilde a(\tau)=a(\tau)-\gamma \tau, \qquad
\tilde b(\tau)=b(\tau)+\frac{\pi^2 \tau^2}{6}.
\end{equation} 
It is easy to see that both infinite products in \eqref{Weierstrass_G} and 
\eqref{eq_G2_inf_prod_Gamma} converge absolutely. Barnes \cite{Barnes1899} proved the following crucial result:  the two definitions 
\eqref{Weierstrass_G} and \eqref{eq_G2_inf_prod_Gamma} are consistent and the function $G(z;\tau)$ satisfies the normalization condition $G(1;\tau)=1$ and the 
two functional equations 
\begin{equation}\label{eq:funct_rel_G}
G(z+1;\tau)=\Gamma\left(\frac{z}{\tau}\right) G(z;\tau), \;\;\; G(z+\tau;\tau)=(2\pi)^{\frac{\tau-1}2}\tau^{-z+\frac12}
\Gamma(z) G(z;\tau).
\end{equation} 
These last three properties can serve as the third definition of the Barnes double gamma function: it is a unique function that is entire in $z$ and analytic in $\tau \in {\mathbb C}\setminus (-\infty,0]$ and which satisfies the two functional equations \eqref{eq:funct_rel_G} and is normalized via $G(1;\tau)=1$. As we will see later, this third definition is very useful when proving various identities and transformation formulas involving $G(z;\tau)$. 

The double gamma function $G(z;\tau)$ is closely related to the symmetric version of the double gamma function $\Gamma_2(z;\omega_1,\omega_2)$, which  was studied by Barnes in \cite{Barnes1901}. This function is symmetric in $\omega_1$ and $\omega_2$, it satisfies the functional identities
\begin{align*}
\Gamma_2(z+\omega_1;\omega_1,\omega_2)&=
\sqrt{2\pi}\frac{ \omega_2^{\frac{1}{2}-\frac{z}{\omega_2}}}{\Gamma(
\frac{z}{\omega_2})}
\Gamma_2(z;\omega_1,\omega_2), \\
\Gamma_2(z+\omega_2;\omega_1,\omega_2)&=
\sqrt{2\pi}\frac{ \omega_1^{\frac{1}{2}-\frac{z}{\omega_1}}}{\Gamma(
\frac{z}{\omega_1})}
\Gamma_2(z;\omega_1,\omega_2)
\end{align*}
and the normalization condition 
$\Gamma_2(\omega_1;\omega_1,\omega_2)=\sqrt{2\pi /\omega_2}$. The function $\Gamma_2(z;\omega_1, \omega_2)$ can be expressed in terms of $G(z;\tau)$ as follows: 
\begin{equation}\label{G_as_Gamma1}
    \Gamma_2(z;\omega_1,\omega_2)=
    (2\pi)^{\frac{z}{2\omega_1}}
    \omega_2^{-\frac{z^2}{2\omega_1 \omega_2}+\frac{z(\omega_1+\omega_2)}{2\omega_1\omega_2}-1}
    G \Big(\frac{z}{\omega_1}; \frac{\omega_2}{\omega_1} \Big)^{-1}.
\end{equation}
This result is established in \textsection 26 in \cite{Barnes1901}.
To be more precise, the above four equations hold only when $\vert {\textnormal{arg}}(\omega_1)-{\textnormal{arg}}(\omega_2) \vert < \pi$, where the arguments of $\omega_i$ lie in $(-\pi, \pi)$. However, this case is all we need for our purposes and the reader can find the corresponding equations for the general case in \cite{Barnes1901}. 

The double gamma function enjoys a number of other important properties and transformation identities: 
\begin{itemize}
\item[(i)] A reflection formula: for $\im(\tau)>0$
\begin{equation}\label{eq_G_tau_q_Poch}
-2\pi \i \tau G\left(\frac12+z;\tau\right)G\left(\frac{1}{2}-z;-\tau\right)
=\frac{\left(-e^{2\pi \i z};q\right)_{\infty}}{(q;q)_{\infty}}, 
\end{equation}
where  $q:=e^{2 \pi \i \tau}$ and $(a;q)_{\infty}:=\prod_{n\ge 0} (1-aq^n)$ is the $q$-Pocchammer symbol. 
\item[(ii)] A modular transformation 
\begin{equation}\label{eq:G_1_over_tau}
G(z;\tau)=(2\pi)^{\frac{z}2 \left(1-\frac1{\tau} \right)} \tau^{\frac{z-z^2}{2\tau}+\frac{z}2-1}  G\left(\frac{z}{\tau};\frac{1}{\tau}\right).
\end{equation}
\item[(iii)] A multiplication formula: for any integers $p,q \in {\mathbb N}$
 \begin{align}
 \label{G_z_p/q_formula2}
 G(z;p \tau/q )&=
 q^{\frac{1}{2 p \tau}(z-1)(qz-p \tau) }  (2\pi)^{-\frac{q-1}{2}(z-1)}
 \\ \nonumber &\qquad \times 
 \prod\limits_{i=0}^{p-1} \prod\limits_{j=0}^{q-1} 
 \frac{G((z+i)/p+j \tau/q;\tau)}{G((1+i)/p+j \tau/q;\tau)}.
 \end{align}
 This formula has two important corollaries: setting $q=1$ and re-scaling $z\mapsto pz$ gives us
  \begin{equation}\label{G_z_p/q_formula3}
 G(pz;p \tau )=
 \prod\limits_{i=0}^{p-1} 
 \frac{G(z+i/p;\tau)}{G((1+i)/p;\tau)},
 \end{equation} 
 whereas if we set $q=p$ and re-scale $z\mapsto pz$ we get
 \begin{equation}\label{G_z_pz}
 G(pz;\tau )=
 p^{\frac{1}{2  \tau}(pz-1)(pz- \tau) }  (2\pi)^{-\frac{1}{2}(p-1)(pz-1)}
 \prod\limits_{i=0}^{p-1} \prod\limits_{j=0}^{p-1} 
 \frac{G(z+(i+j\tau)/p;\tau)}{G((1+i+j \tau)/p;\tau)}
 \end{equation}  
 \item[(iv)]  
  A product identity 
  \begin{equation}\label{eq:modular}
  G(z;\tau)=\Big(\frac{1+\tau}{\tau}\Big)^{\frac{z^2}{2\tau}-(1+\tau) \frac{z}{2\tau}+1}
  (2\pi)^{-\frac{z}{2\tau}}G(z+1;1+\tau)G(z/\tau;1+1/\tau).
 \end{equation}
 \item[(v)] 
 An integral representation: for $\re(z)>0$ and $\re(\tau)>0$
 \begin{align}
 \label{lnG_integral}
     \ln G(z;\tau) &= \int_0^{\infty} \bigg( \frac{e^{-\tau x}-e^{-z x}}{(1-e^{-x})(1-e^{-\tau x})}-z \frac{e^{-\tau x}}{1-e^{-\tau x}}
     \\ \nonumber & \qquad 
     +(z-1)\Big(\frac{z}{2\tau}-1\Big) e^{-\tau x} + \frac{e^{-x}}{1-e^{-x}} \bigg) \frac{dx}{x}.
 \end{align}
\end{itemize}

The reflection formula \eqref{eq_G_tau_q_Poch} was established in \textsection 13 in \cite{Barnes1899}. 
Identity \eqref{eq:G_1_over_tau} follows by setting $\omega_1=1$ and $\omega_2=\tau$ in \eqref{G_as_Gamma1} and using the fact that 
$\Gamma_2(z;\omega_1,\omega_2)=\Gamma_2(z;\omega_2,\omega_1)$. Formula 
\eqref{G_z_p/q_formula2} follows from \eqref{G_as_Gamma1} and the formula for $\Gamma_2(z;\omega_1/p,\omega_2/q)$ on page 359 in \cite{Barnes1901}. Formula \eqref{lnG_integral} can be found in \cite{Bill1997}[Section 5]. Identity \eqref{eq:modular} seems to be new: we present the proof of this formula in Section \ref{section_proofs}. In the same Section \ref{section_proofs} we also present direct proofs of 
\eqref{eq:G_1_over_tau} and \eqref{G_z_p/q_formula2}, which do not rely on the symmetric double gamma function $\Gamma_2(z;\omega_1,\omega_2)$. 

Our main goals in this paper are to develop an algorithm for numerical computation of $G(z;\tau)$ and to state a complete asymptotic expansion of this function as $z\to \infty$. In Section \ref{section_polynomials} we will introduce and study two sequences of polynomials which will be needed for achieving these goals in Section \ref{section_results}. The main results in Section \ref{section_results} are: (i) Theorem \ref{thm_main1}, which gives a formula that can be used to numerically compute $G(z;\tau)$; (ii)  Theorem \ref{thm2} and Proposition \ref{proposition_b0}, which state a complete asymptotic expansion of $G(z;\tau)$ as $z\to \infty$ and discuss the properties of the constant term in this expansion; (iii) Proposition \ref{Proposition_C_D_tau} where we present new formulas for the gamma modular forms $C(\tau)$ and $D(\tau)$.

%**************************************************************************************************
%**************************************************************************************************
%**************************************************************************************************

\section{Two sequences of polynomials}\label{section_polynomials}

%**************************************************************************************************
%**************************************************************************************************
%**************************************************************************************************

We recall that  the sequence $\{B_n\}_{n\ge 0}$ of the Bernoulli numbers is defined via the generating function
\begin{equation}\label{Bn_gen_function}
\frac{u}{e^u-1}=\sum\limits_{n\ge 0} B_n \frac{u^n}{n!}. 
\end{equation}
It is known that $B_n$ are all rational numbers, $B_{2m+1}=0$ for $m\ge 1$ and the first six non-zero  values are 
$$
B_0=1, \; B_1=-\frac{1}{2}, \; B_2=\frac{1}{6}, \; B_4=-\frac{1}{30}, \; B_6=\frac{1}{42}, 
\; B_8=-\frac{1}{30}, \dots 
$$
The Bernoulli polynomials are defined as
$$
B_n(x)=\sum\limits_{k=0}^n \binom{n}{k} B_{n-k} x^k
$$
and have the generating function
\begin{equation}\label{Bn_pols_gen_function}
    \frac{u e^{xu}}{e^u-1}=\sum\limits_{n\ge 0} B_n(x) \frac{u^n}{n!}. 
\end{equation}

Now we define the first sequence of polynomials that will be important for us later: 
\begin{equation}\label{def_qn}
q_n(\tau):=\sum\limits_{k=0}^n \binom{n}{k} B_k B_{n-k} \tau^k, \;\;\; n\ge 0. 
\end{equation}
If $n$ is even then $q_{n}(\tau)$ is a polynomial of degree $n$. 
If $n=2m+1\ge 3$ is odd then the only non-zero terms in the above sum are those with $k=1$ and $k=n-1$, so that in this case $q_n(\tau)$ is a polynomial of degree $n-1$ given by
\begin{equation}
q_{2m+1}(\tau)=-(m+\tfrac{1}{2}) B_{2m} \times  \tau (1+\tau^{2m-1}). 
\end{equation}
The list of polynomials $q_n(\tau)$ for $0\le n \le 21$ is presented in Appendix \ref{AppendixA}. In the next result we collect some properties of polynomials $q_n(\tau)$. 

\newpage 
\begin{proposition}\label{proposition1}
${}$
\begin{itemize}
\item[(i)] The exponential generating functions of polynomials $\{q_n(\tau)\}_{n\ge 0}$ is
\begin{equation}\label{qn_generating_function}
\sum\limits_{n\ge 0} 
q_{n}(\tau)\frac{u^n}{n!} =\frac{\tau u^2}{(e^{u}-1)(e^{\tau u}-1)}.
\end{equation}
\item[(ii)] For $n\ge 0$ we have $q_n(\tau)=\tau^n q_n(\tau^{-1})$.

\item[(iii)] The polynomials $q_n(\tau)$ can be computed recursively via 
\begin{equation}
\label{qn_recursion}
q_n(\tau)=-\frac{1}{\tau} \sum\limits_{k=1}^n \binom{n}{k} \frac{(1+\tau)^{k+2}-1-\tau^{k+2}}{(k+1)(k+2)}
q_{n-k}(\tau)
\end{equation}
by starting with $q_0(\tau)=1$. 
\item[(iv)] The polynomials $q_n(\tau)$ satisfy the following summation identities: for $n\ge 0$, $\tau \in {\mathbb C}$ and $y \in {\mathbb C}\setminus\{0\}$
\begin{align}
\label{sum_identity1}
&\sum\limits_{k=0}^n \binom{n}{k} q_k(\tau)=(-1)^n  q_n(-\tau), \\
\label{sum_identity2}
&\sum\limits_{k=0}^n \binom{n+1}{k} q_k(\tau)=(n+1) B_n \tau^n, \\
\label{sum_identity3}
&
\sum\limits_{k=0}^n \binom{n+1}{k+1}   
(B_{k+1}(y)-B_{k+1}) y^{n-k} q_{n-k}(\tau)=(n+1)y  q_n(\tau y),\\ 
\label{sum_identity4}
&\sum\limits_{k=0}^n \binom{n+1}{k+1}   
(B_{k+1}(y)-B_{k+1}) \Big(\frac{\tau}{y}\Big)^k q_{n-k}(\tau)=(n+1) y q_n\Big(\frac{\tau}{y}\Big).
\end{align}
\end{itemize}
\end{proposition}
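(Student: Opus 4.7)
The unifying idea is that all four parts follow, with a little bookkeeping, from the generating function in (i), so I would prove (i) first and then derive everything else by simple manipulations of the generating series
$$F(u) = F(u;\tau) := \sum_{n\ge 0} q_n(\tau) \frac{u^n}{n!}.$$
For part (i), the definition \eqref{def_qn} makes $q_n(\tau)$ the coefficient of $u^n/n!$ in the Cauchy product of $\sum_k B_k (\tau u)^k/k!$ and $\sum_m B_m u^m/m!$. Each of these series equals, by \eqref{Bn_gen_function}, a known closed form, so multiplying them gives $F(u)=\frac{\tau u}{e^{\tau u}-1}\cdot\frac{u}{e^u-1}$, which is \eqref{qn_generating_function}.

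Part (ii) is immediate from the identity $F(\tau u;1/\tau) = F(u;\tau)$, checked directly from \eqref{qn_generating_function}; matching coefficients of $u^n$ gives $\tau^n q_n(1/\tau)=q_n(\tau)$. For part (iii), I would rewrite \eqref{qn_generating_function} as the functional equation
$$F(u)\cdot\frac{(e^u-1)(e^{\tau u}-1)}{\tau u^2}=1,$$
expand the second factor using $(e^u-1)(e^{\tau u}-1)=e^{(1+\tau)u}-e^u-e^{\tau u}+1$, and observe that its $u^0$ and $u^1$ coefficients vanish, so the series starts at $u^0$ with constant term $1$ and has $j$-th coefficient $\frac{(1+\tau)^{j+2}-1-\tau^{j+2}}{\tau(j+1)(j+2)}/j!$ (for $j\ge 0$). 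The recursion \eqref{qn_recursion} then pops out of the Cauchy convolution after isolating $q_n(\tau)$ from the $k=0$ term.

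All four summation identities in part (iv) are of the form ``binomial convolution of $q_k(\tau)$ with a known sequence,'' so each one translates under $\sum (\cdot) u^n/n!$ into a product of two generating functions. Concretely: (iv.1) corresponds to $e^u F(u)$, which must equal the generating function of $(-1)^n q_n(-\tau)$, namely $F(-u;-\tau)$ — a direct algebraic check using \eqref{qn_generating_function}. For (iv.2), the shifted binomial coefficient $\binom{n+1}{k}$ suggests multiplying by $u^{n+1}/(n+1)!$; after a one-line manipulation this reduces to showing $F(u)(e^u-1)=\frac{\tau u^2}{e^{\tau u}-1}$, which is obvious from (i), while the RHS $(n+1)B_n\tau^n$ is the coefficient sequence of $\frac{d}{du}\bigl[\frac{\tau u^2}{e^{\tau u}-1}\bigr]$. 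For (iv.3) the key generating function is $\sum_{k\ge0}(B_{k+1}(y)-B_{k+1})\frac{u^{k+1}}{(k+1)!}=\frac{u(e^{yu}-1)}{e^u-1}$ (from \eqref{Bn_pols_gen_function}); multiplying this by $F(yu)$ collapses an $(e^{yu}-1)$ factor and produces exactly $yu\cdot F(yu;\tau y)$ times a trivial factor, matching the RHS. For (iv.4) I would apply the same template but first substitute $v=u\tau/y$ inside the Bernoulli-polynomial factor, after which the same cancellation yields $yu \cdot F(u; \tau/y)$.

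The main obstacle is bookkeeping: (iv.3) and (iv.4) both involve three different exponentials $e^u$, $e^{yu}$, $e^{\tau u}$ (or $e^{\tau u/y}$), and the correctness of the whole calculation hinges on one $(e^{yu}-1)$ factor (respectively $(e^{\tau u}-1)$) cancelling between the Bernoulli-polynomial generating series and $F(yu)$ (respectively $F(u)$). Getting the substitutions and powers of $y$ and $\tau$ consistent is where small errors are easiest to make, but conceptually there is nothing beyond the manipulation of \eqref{qn_generating_function} and \eqref{Bn_pols_gen_function}.
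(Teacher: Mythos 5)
Your proposal is correct and follows essentially the same route as the paper: every part is extracted from the generating function \eqref{qn_generating_function} by Cauchy products (the paper gets (ii) directly from the definition \eqref{def_qn} and deduces \eqref{sum_identity4} from \eqref{sum_identity3} together with (ii), but these are trivial variants of your substitutions). The only slip is notational: in your treatment of \eqref{sum_identity3} the product $\frac{u(e^{yu}-1)}{e^u-1}\cdot F(yu;\tau)$ equals $yu\,F(u;\tau y)$ rather than ``$yu\,F(yu;\tau y)$ times a trivial factor,'' and it is the former that matches the right-hand side $(n+1)y\,q_n(\tau y)$.
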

\begin{proof}
Formula \eqref{qn_generating_function} follows from \eqref{Bn_gen_function} and  \eqref{def_qn}. The identity $q_n(\tau)=\tau^n q_n(\tau^{-1})$ from item (ii) follows at once from the definition \eqref{def_qn}. To prove \eqref{qn_recursion} we first derive a series expansion 
\begin{align*}
\frac{1}{\tau u^2} (e^u-1)(e^{\tau u}-1)&=\frac{1}{\tau u^2}
(e^{(1+\tau)u}-e^{u}-e^{\tau u}+1)\\
&=\frac{1}{\tau} \sum\limits_{n\ge 0} \big( (1+\tau)^{n+2}-1-\tau^{n+2} \big) \frac{u^n}{(n+2)!}.
\end{align*}
Using the above series and \eqref{qn_generating_function} we obtain
$$
1=\sum\limits_{n\ge 0} 
q_{n}(\tau)\frac{u^n}{n!} \times \frac{1}{\tau} \sum\limits_{n\ge 0} \big( (1+\tau)^{n+2}-1-\tau^{n+2} \big) \frac{u^n}{(n+2)!},
$$
and inspecting the coefficient in front of $u^n$ (with $n\ge 1$) we conclude that 
$$
\frac{1}{\tau} \sum\limits_{k=0}^n \binom{n}{k} \frac{(1+\tau)^{k+2}-1-\tau^{k+2}}{(k+1)(k+2)}
q_{n-k}(\tau)=0,
$$
which is equivalent to \eqref{qn_recursion}.

Identity \eqref{sum_identity1} is established by comparing the coefficients in front of $u^n$ in the identity
\begin{align*}
    \sum\limits_{n\ge 0} (-1)^n q_{n}(-\tau) \frac{u^n}{n!}&=\frac{\tau u^2}{(1-e^{-u})(e^{\tau u}-1)}\\
    &=
    e^u \times \frac{\tau u^2}{(e^{u}-1)(e^{\tau u}-1)} =\sum\limits_{n\ge 0} \frac{u^n}{n!} 
    \times \sum\limits_{n\ge 0} 
q_{n}(\tau) \frac{u^n}{n!}.
\end{align*}
To prove \eqref{sum_identity2} we expand each term in square brackets in 
$$
 \Big[ \frac{e^u-1}{u} \Big] \times \Big[ \frac{\tau u^2}{(e^u-1)(e^{\tau u}-1)}\Big]=
 \Big[ \frac{\tau u}{e^{\tau u}-1} \Big]
$$
in power series in $u$ (with the help of Taylor series for $e^u$ and formulas 
\eqref{Bn_gen_function} and \eqref{qn_generating_function}) and compare the coefficients in front of $u^n$. Similarly, formula \eqref{sum_identity3} is obtained by expanding each term in square brackets in
the identity 
$$
\Big[\frac{e^{yu}-1}{e^u-1}\Big]\times \Big[ \frac{\tau (yu)^2}{(e^{yu}-1)(e^{\tau (y u)}-1)}\Big]=
y\times \Big[\frac{(y\tau) u^2}{(e^{u}-1)(e^{(y\tau) u}-1)}\Big]
$$
and comparing the coefficients in front of $u^n$.
One would also need to use the following series expansion
$$
\sum\limits_{n\ge 0} \frac{B_{n+1}(y)-B_{n+1}}{n+1} \times \frac{u^n}{n!}=
\frac{e^{yu}-1}{e^u-1},
$$
which is easily obtained from \eqref{Bn_gen_function} and \eqref{Bn_pols_gen_function}. 
Finally, formula \eqref{sum_identity4} follows from \eqref{sum_identity3} and identity 
$q_n(\tau)=\tau^n q_n(\tau^{-1})$. 
\end{proof}

To state our main results in Section \ref{section_results}, we will also need a sequence of polynomials 
$\{P_n(z;\tau)\}_{n\ge 1}$, which are  defined as follows: 
\begin{equation}\label{def_P_k}
P_n(z;\tau):=\sum\limits_{k=1}^n 
\binom{n+2}{k+2}   q_{n-k}(\tau) z^{k-1}. 
\end{equation}
It is clear that $P_n(z;\tau)$ is a polynomial with rational coefficients; it is a monic polynomial of degree $n-1$ in $z$ and a polynomial of degree not greater than $n-1$ in $\tau$. The first five polynomials $P_n(z;\tau)$ are 
\begin{align*}
P_1(z;\tau)&=1, \\
P_2(z;\tau)&=z-2(\tau+1), \\
P_3(z;\tau)&=z^2 -\frac{1}{6}\big(15 (\tau+1) z - 10 (\tau^2 +3 \tau + 1)\big),\\
P_4(z;\tau)&=z^3-\frac{1}{2}\big(6 (\tau+1) z^2 - 5 (\tau^2+3\tau+1)z+10\tau(\tau+1)\big), \\
P_5(z;\tau)&=z^4-\frac{1}{12} \big(42 (\tau+1)z^3-42(\tau^2+3 \tau+1) z^2\\
& \qquad \qquad +105\tau (\tau+1) z 
+14 (\tau^4-5 \tau^2 +1)\big).
\end{align*}
Properties of polynomials $P_n(z;\tau)$ are presented in the next result. 

\newpage

\begin{proposition}\label{proposition2}
${}$
\begin{itemize}
    \item[(i)] The sequence of polynomials 
    $\{P_n(z;\tau)\}_{n\ge 1}$ has generating function
    \begin{equation}\label{P_k_exp_generating_function}
\sum\limits_{n\ge 1}  P_{n}(z;\tau) \frac{u^{n}}{(n+2)!} =
\frac{\tau}{z^3} \times 
\frac{e^{uz}-1-uz-\frac{u^2z^2}{2}}{(e^{ u}-1)(e^{\tau u}-1)}.
\end{equation}
\item[(ii)] For $n\ge 1$ we have $P_n(z/\tau;1/\tau)=\tau^{1-n} P_n(z;\tau)$. 
    \item[(iii)] An alternative expression for $P_n(z;\tau)$ is
\begin{equation}\label{def_P_k2}
P_n(z;\tau)=\sum\limits_{k=1}^n 
\binom{n+2}{k+2}  \tau^{n-k} B_{n-k} \tilde B_{k+2}(z), 
\end{equation}
where $\tilde B_k(z)$ are polynomials of degree $k-3$ defined by
$$
 \tilde B_k(z)=z^{-3} \Big[B_{k}(z)-B_{k}(0)- B_{k}'(0) z-B_{k}''(0)\frac{z^2}{2}  \Big], \;\;\; k\ge 3.  
$$
\item[(iv)] For $n\ge 2$ the polynomials $P_n(z;\tau)$ can be computed recursively via
\begin{align}
\label{Pk_recursion}
P_n(z;\tau)=
z^{n-1}-\frac{1}{\tau} \sum\limits_{k=1}^{n-1} \binom{n+2}{k+2} \frac{(1+\tau)^{k+2}-1-\tau^{k+2}}{(n-k+1)(n-k+2)}
P_{n-k}(z;\tau),
\end{align}
by starting with $P_1(z;\tau)=1$. 
\end{itemize}
\end{proposition}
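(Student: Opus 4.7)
The plan is to prove all four items using generating functions, leveraging the interplay between the definition \eqref{def_P_k} of $P_n(z;\tau)$ and the generating function for $q_n(\tau)$ from Proposition \ref{proposition1}(i).

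For item (i), I would substitute the definition $P_n(z;\tau) = \sum_{k=1}^{n} \binom{n+2}{k+2} q_{n-k}(\tau) z^{k-1}$ into the left-hand side of \eqref{P_k_exp_generating_function} and use $\binom{n+2}{k+2}/(n+2)! = 1/((k+2)!(n-k)!)$. After switching the order of summation, the double sum factors as the product of $\sum_{k\ge 1} z^{k-1} u^k/(k+2)!$ and $\sum_{j \ge 0} q_j(\tau) u^j/j!$. The first sum equals $(e^{uz}-1-uz-u^2 z^2/2)/(z^3 u^2)$, while the second equals $\tau u^2/((e^u-1)(e^{\tau u}-1))$ by Proposition \ref{proposition1}(i); their product is exactly the right-hand side of \eqref{P_k_exp_generating_function}.

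Item (ii) then follows from (i) by substituting $\tau\mapsto 1/\tau$, $z\mapsto z/\tau$, $u\mapsto\tau u$ in the generating function: the right-hand side picks up an overall factor of $\tau$, while on the left-hand side the coefficient of $u^n/(n+2)!$ becomes $\tau^n P_n(z/\tau;1/\tau)$. Matching coefficients gives $P_n(z/\tau;1/\tau) = \tau^{1-n} P_n(z;\tau)$. Alternatively, one can insert $q_m(1/\tau) = \tau^{-m} q_m(\tau)$ from Proposition \ref{proposition1}(ii) directly into \eqref{def_P_k}.

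For item (iii), I would again verify that the generating function of $\sum_{k=1}^n \binom{n+2}{k+2} \tau^{n-k} B_{n-k} \tilde B_{k+2}(z)$ matches the right-hand side of \eqref{P_k_exp_generating_function}. The key computation is to evaluate $\sum_{k\ge 0} \tilde B_{k+2}(z) u^{k+2}/(k+2)!$. Using the definition of $\tilde B_k(z)$ together with \eqref{Bn_gen_function} and \eqref{Bn_pols_gen_function}, and noting that $B_k'(0)=kB_{k-1}$ and $B_k''(0)=k(k-1)B_{k-2}$, the low-order correction terms telescope cleanly (in particular $\tilde B_2(z)=0$) and the sum collapses to $(u/z^3)\cdot (e^{uz}-1-uz-u^2 z^2/2)/(e^u-1)$. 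Convolving with $\tau u/(e^{\tau u}-1) = \sum_m B_m (\tau u)^m/m!$ then reproduces the generating function from (i). I expect this telescoping of the correction terms to be the main technical step; the rest is bookkeeping.

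Finally, item (iv) follows by algebraic rearrangement of (i). Observing that $\sum_{n\ge 1} z^{n-1} u^n/(n+2)! = (e^{uz}-1-uz-u^2 z^2/2)/(z^3 u^2)$, the identity \eqref{P_k_exp_generating_function} is equivalent to
\[
\sum_{n\ge 1} \frac{z^{n-1} u^n}{(n+2)!} = \frac{(e^u-1)(e^{\tau u}-1)}{\tau u^2}\, \sum_{n\ge 1} P_n(z;\tau) \frac{u^n}{(n+2)!}.
\]
Expanding $(e^u-1)(e^{\tau u}-1) = e^{(1+\tau)u} - e^u - e^{\tau u} + 1$ as a power series in $u$ and isolating the constant term, one checks that
\[
\frac{(e^u-1)(e^{\tau u}-1)}{\tau u^2} = 1 + \frac{1}{\tau} \sum_{k\ge 1} \frac{(1+\tau)^{k+2} - 1 - \tau^{k+2}}{(k+2)!}\, u^k.
\]
Comparing coefficients of $u^n$ on both sides and using $\binom{n+2}{k+2}/((n-k+1)(n-k+2)) = (n+2)!/((k+2)!(n-k+2)!)$ to recognize the binomial factor then yields the recursion \eqref{Pk_recursion}.
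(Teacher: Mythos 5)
Your proposal is correct and follows essentially the same route as the paper: all four items are handled by the same generating-function manipulations (factoring the double sum for (i), invoking $q_n(\tau)=\tau^n q_n(1/\tau)$ for (ii), the auxiliary expansion $\sum_{n\ge 1}\tilde B_{n+2}(z)\,u^n/(n+2)!=(e^{uz}-1-uz-u^2z^2/2)/(z^3u(e^u-1))$ for (iii), and coefficient comparison against the expansion of $(e^u-1)(e^{\tau u}-1)/(\tau u^2)$ for (iv)). The only difference is that you carry out in full the details of (iv) that the paper delegates to the reader by analogy with the proof of \eqref{qn_recursion}, and your computations there check out.
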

\begin{proof}
To prove \eqref{P_k_exp_generating_function} 
we use series expansions \eqref{qn_generating_function} and
$$
\frac{e^{uz}-1-uz-\frac{u^2 z^2}{2}}{u^2 z^3}
=\sum\limits_{n\ge 1} \frac{z^{n-1}u^n}{(n+2)!}
$$
and check that the coefficient in front of $u^n$ in the series expansion of 
$$
\Big[ \frac{e^{uz}-1-uz-\frac{u^2 z^2}{2}}{u^2 z^3} \Big] \times 
\Big[ \frac{\tau u^2}{(e^{ u}-1)(e^{\tau u}-1)}\Big]
$$
is equal to $P_n(z;\tau)/(n+2)!$. Identity $P_n(z/\tau;1/\tau)=\tau^{1-n} P_n(z;\tau)$ follows from the definition of $P_n(z;\tau)$  (formula \eqref{def_P_k})
and result in item (ii) in Proposition \ref{proposition1}. Formula \eqref{def_P_k2} is easily derived from 
\eqref{P_k_exp_generating_function} and series expansion
\begin{align*}
\frac{e^{uz}-1-uz-\frac{u^2z^2}{2}}{z^3 u(e^{u}-1)}=
\sum\limits_{n\ge 1}
\tilde B_{n+2}(z)
\times \frac{u^n}{(n+2)!}, 
\end{align*}
which follows from \eqref{Bn_pols_gen_function}. Finally, \eqref{Pk_recursion} is proved in exactly the same way as formula \eqref{qn_recursion} in Proposition 
\ref{proposition1} and we leave the details to the reader. 
\end{proof}

%**************************************************************************************************
%**************************************************************************************************
%**************************************************************************************************

\section{Main results}\label{section_results}

%**************************************************************************************************
%**************************************************************************************************
%**************************************************************************************************

First we will address the question of how to numerically  compute the Barnes double gamma function $G(z;\tau)$. If $z=m\tau+n$ with either $m$ or $n$ positive, then $G(z;\tau)$ can be computed via functional equations \eqref{eq:funct_rel_G} and the fact that $G(1;\tau)=1$. On the other hand, if $\tau=q/p$ is a rational number, then the problem of computing $G(z;\tau)$ is reduced with the help of \eqref{G_z_p/q_formula2} to the problem of numerically evaluating $G(z)=G(z;1)$, which can be done efficiently by an algorithm presented in \cite{Kuznetsov2022}. Here we present an algorithm for computing values of $G(z;\tau)$ for complex values of $z$ and/or $\tau$, which only requires being able to evaluate logarithm of the gamma function in the entire complex plane (for this one could use Lanczos approximation \cite{Lanczos1964} or approximations developed in 
\cite{Kuznetsov2022}.)

Formula \eqref{eq_G2_inf_prod_Gamma} represents the double gamma function $G(z;\tau)$ as an infinite product involving gamma functions. We take the truncated part of this infinite product as an approximation of the Barnes double gamma function
\begin{equation}
\label{def_G_N}
G_N(z;\tau):=\frac{1}{\tau \Gamma(z)} e^{\tilde a(\tau) \frac{z}{\tau}+\tilde b(\tau)\frac{z^2}{2\tau^2}} 
\prod\limits_{m=1}^{N} \frac{\Gamma(m\tau)}{\Gamma(z+m\tau)} e^{z\psi(m\tau)+\frac{z^2}2 \psi'(m\tau)}
\end{equation}
where $\tilde a(\tau)$ and $\tilde b(\tau)$ are defined in \eqref{def_tilde_a_tilde_b}. In the next result we present the complete asymptotic expansion in powers of $N^{-1}$ of the error term of this approximation. 

\begin{theorem}\label{thm_main1} Assume that $z\in {\mathbb C}$
and $|\arg(\tau)|<\pi$. Then for any $M=1,2,3,\dots$  we have  
\begin{equation}\label{eqn:compute_G}
G(z;\tau)=G_N(z;\tau) \exp\Big( z^3 \sum \limits_{k=1}^M \frac{(- \tau)^{-k-1}P_k(z;-\tau)}{k(k+1)(k+2)} \times N^{-k}
+O(N^{-M-1})\Big), 
\end{equation}
as $N\to +\infty$.
\end{theorem}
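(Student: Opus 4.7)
The plan is to derive an exact integral representation for $\ln G(z;\tau)-\ln G_N(z;\tau)$ when $\re\tau>0$, then apply Watson's lemma, using the generating function \eqref{P_k_exp_generating_function} to identify the coefficients precisely with the polynomials $P_n(z;-\tau)$. Analytic continuation will then extend the result to the full sector $|\arg\tau|<\pi$.

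Taking logarithms of \eqref{eq_G2_inf_prod_Gamma} and \eqref{def_G_N} and subtracting gives
$$\ln G(z;\tau)-\ln G_N(z;\tau)=\sum_{m\ge N+1}\Big[\ln\Gamma(m\tau)-\ln\Gamma(z+m\tau)+z\psi(m\tau)+\tfrac{z^2}{2}\psi'(m\tau)\Big].$$
The representation $\psi^{(k-1)}(y)=(-1)^k\int_0^\infty t^{k-1}e^{-yt}/(1-e^{-t})\,dt$ (for $k\ge 2$, $\re y>0$) combined with the Taylor expansion $\ln\Gamma(z+y)-\ln\Gamma(y)-z\psi(y)-\tfrac{z^2}{2}\psi'(y)=\sum_{k\ge 3}z^k\psi^{(k-1)}(y)/k!$ (convergent for $|z|<\re y$, hence for $y=m\tau$ once $N$ is large), together with an interchange of sum and integral, yields
$$\ln\Gamma(z+y)-\ln\Gamma(y)-z\psi(y)-\tfrac{z^2}{2}\psi'(y)=\int_0^\infty\frac{e^{-yt}\bigl(e^{-zt}-1+zt-\tfrac{z^2 t^2}{2}\bigr)}{t(1-e^{-t})}\,dt.$$
Substituting $y=m\tau$, summing the geometric series $\sum_{m\ge N+1}e^{-m\tau t}=e^{-(N+1)\tau t}/(1-e^{-\tau t})$, and using the algebraic identity $e^{-(N+1)\tau t}/(1-e^{-\tau t})=e^{-N\tau t}/(e^{\tau t}-1)$, I obtain for $\re\tau>0$
$$\ln G(z;\tau)-\ln G_N(z;\tau)=-\int_0^\infty\frac{\bigl(e^{-zt}-1+zt-\tfrac{z^2 t^2}{2}\bigr)\,e^{-N\tau t}}{t(1-e^{-t})(e^{\tau t}-1)}\,dt.$$

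The key observation is that \eqref{P_k_exp_generating_function}, after the substitutions $\tau\mapsto-\tau$ and $u\mapsto-t$ (and using $(e^{-t}-1)(e^{\tau t}-1)=-(1-e^{-t})(e^{\tau t}-1)$), rearranges into
$$\frac{e^{-zt}-1+zt-\tfrac{z^2 t^2}{2}}{t(1-e^{-t})(e^{\tau t}-1)}=\frac{z^3}{\tau}\sum_{n\ge 1}\frac{(-1)^n P_n(z;-\tau)}{(n+2)!}\,t^{n-1}.$$
The right-hand side is analytic at $t=0$, and the integrand decays exponentially as $t\to\infty$ for $\re\tau>0$, so Watson's lemma applies term by term. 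Using $\int_0^\infty t^{n-1}e^{-N\tau t}dt=(n-1)!/(N\tau)^n$ together with the elementary identity $(-1)^{n+1}/\tau^{n+1}=(-\tau)^{-n-1}$, this gives for any $M\ge 1$
$$\ln G(z;\tau)-\ln G_N(z;\tau)=z^3\sum_{n=1}^{M}\frac{(-\tau)^{-n-1}P_n(z;-\tau)}{n(n+1)(n+2)}\,N^{-n}+O(N^{-M-1}),$$
which is precisely \eqref{eqn:compute_G}.

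The main obstacle is extending this identity from $\re\tau>0$ to the full sector $|\arg\tau|<\pi$. The direct approach is contour rotation: for $\tau=re^{i\alpha}$ I would replace the ray $[0,\infty)$ in the $t$-integral by $e^{-i\alpha}[0,\infty)$, making $\tau t$ real positive so that $e^{-N\tau t}$ still provides exponential decay. For $|\alpha|<\pi/2$ no deformation is needed, while for $\pi/2\le|\alpha|<\pi$ care must be taken that the rotation does not cross poles of the integrand at $t=2\pi ik$ or $t=2\pi ik/\tau$ with $k\in\mathbb{Z}\setminus\{0\}$; this can typically be handled by an intermediate keyhole deformation. A cleaner alternative is to observe that the remainder in \eqref{eqn:compute_G} is analytic in $\tau$ on the slit plane for each fixed $N$, so the $O(N^{-M-1})$ bound on $\re\tau>0$ propagates to compact subsets of the full sector by a Phragm\'en--Lindel\"of or Vitali-type argument.
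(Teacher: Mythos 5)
Your argument for $\re\tau>0$ is correct and takes a genuinely different route from the paper. You convert the tail $R_N=\sum_{m>N}r_m$ into a single exact Laplace integral $-\int_0^\infty \phi_\tau(t)e^{-N\tau t}\,dt$ via the integral representation of $\psi^{(k)}$ and summation of the geometric series, and then read off the coefficients from the generating function \eqref{P_k_exp_generating_function} by Watson's lemma; I checked the substitution $\tau\mapsto-\tau$, $u\mapsto-t$ and the bookkeeping $(-1)^{n+1}\tau^{-n-1}=(-\tau)^{-n-1}$, and they match \eqref{eqn:compute_G} exactly. The paper instead expands each $r_m$ using the \emph{uniform} asymptotic expansion of $\psi^{(l+1)}$ in $|\arg|<\pi-\delta$ and then sums the tails $\sum_{m>N}m^{-s}$ by Euler--Maclaurin, reassembling the double sum into $q_{i-l}(-\tau)$ and hence $P_i(z;-\tau)$. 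Your method buys an exact closed-form error integral and makes the appearance of the generating function \eqref{P_k_exp_generating_function} completely transparent, whereas the paper's method buys validity on the whole slit plane $|\arg\tau|<\pi$ in one stroke, since every ingredient it uses is already uniform there.

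The genuine gap is the extension from $\re\tau>0$ to $|\arg\tau|<\pi$, which in your setup is not a routine afterthought. The Vitali/Phragm\'en--Lindel\"of suggestion is circular as stated: to apply it to $f_N(\tau)=N^{M+1}\bigl(R_N-\text{partial sum}\bigr)$ you need local uniform boundedness of $f_N$ on the larger domain, which is essentially the estimate you are trying to prove. The contour rotation can be made to work, but not as a single rotation of $[0,\infty)$ to $e^{-i\arg\tau}[0,\infty)$: for $\arg\tau=\alpha\in(\pi/2,\pi)$ the poles $t=2\pi ik/\tau$, $k>0$, sit at angle $\pi/2-\alpha\in(-\pi/2,0)$, and a naive rotation to angle $-\alpha$ sweeps across them, picking up residues proportional to $e^{-2\pi ikN}=1$ that do not decay in $N$ and would wreck the expansion. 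The correct statement is that as $\tau$ rotates out of the right half-plane the contour must be deformed \emph{continuously along with the poles}, ending on a ray at angle $\theta$ with $-\pi/2<\theta<\pi/2-\alpha$ (this sector is nonempty precisely because $\alpha<\pi$, and on it $\re(\tau e^{i\theta})>0$, so Watson's lemma still applies); a keyhole deformation is not the right picture. Spelling this out --- or simply switching to the paper's uniform-expansion argument for the sector extension --- is needed to close the proof.
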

\begin{proof}
We fix the values of $z$ and $\tau$ and denote 
\begin{equation}\label{def:r_m}
r_m(z;\tau):=\ln \Gamma(m\tau)-\ln \Gamma(z+m\tau) + z\psi(m\tau)+\frac{z^2}2 \psi'(m\tau)
\end{equation}
and
\begin{align}
\label{def_R_N}
R_N(z;\tau):=
\sum\limits_{m=N+1}^{\infty} r_{m}(z;\tau). 
\end{align}
We assume that $N$ is big enough, so that the circle 
$\{w \in {\mathbb C} \; : \; |w-N \tau|\le 2|z|\}$ does not intersect the negative half-line $(-\infty, 0]$. Since $\tau \in {\mathbb C} \setminus (-\infty, 0]$, this condition implies that for $m>N$ we have $z+m \tau \notin (-\infty,0]$ so that all the terms in \eqref{def_R_N} are well defined. 
It is clear from \eqref{eq_G2_inf_prod_Gamma} that
\begin{equation}\label{G_G_N_formula}
G(z;\tau)=G_N(z;\tau) \times \exp\big( R_N(z;\tau)\big),
\end{equation}
thus to prove \eqref{eqn:compute_G} we need to find an asymptotic expansion of $R_N(z;\tau)$ in powers of $N^{-1}$. 

Our first step is to write the Taylor expansion with the remainder term
\begin{align*}
\ln \Gamma(z + m \tau)&=\ln \Gamma(m \tau)+ z \psi( m \tau)\\
&
+ \frac{z^2}{2!} \psi'(m \tau)+\dots+ \frac{z^{M+2}}{(M+2)!} \psi^{(M+1)}(m \tau)+{\epsilon}_{M+2}(z,m\tau). 
\end{align*}
From the above series and \eqref{def:r_m} we  obtain 
$$
r_m(z;\tau)=-\sum\limits_{l=1}^M \frac{z^{l+2}}{(l+2)!} \psi^{(l+1)}(m \tau)-{\epsilon}_{M+2}(z,m\tau). 
$$
 Using the Taylor polynomial approximation with the error term in the form
$$
f(z)=\sum\limits_{k=0}^n \frac{z^k}{k!} f^{(k)}(0) 
 +\frac{z^{n+1}}{n!} 
\int_0^1 f^{(n+1)}(sz) (1-s)^n d s, 
$$
with $f(z)=\ln \Gamma(z+m\tau)$ and $n=M+2$, coupled with the maximum modulus principle, we obtain an upper bound
$$
\vert {\epsilon}_{M+2}(z,m\tau) \vert \le \frac{\vert z \vert^{M+3}}{(M+3)!} \times \max\limits_{|w|=|z|}
\vert \psi^{(M+2)}(w+ m \tau) \vert 
$$
It is known (see formula 5.15.9 in \cite{NIST:DLMF}) that $\psi^{(l+1)}(z)$ has asymptotic expansion
$$
\psi^{(l+1)}(z) \sim \sum\limits_{j\ge 0}  \frac{(j+l)!}{(j)!}\frac{(-1)^{j+l} B_{j} }{z^{l+j+1}}, \;\;\; z\to \infty. 
$$
Moreover, this asymptotic expansion is uniform in domains of the form
$\vert\arg(z)\vert<\pi-\delta$, for arbitrary small $\delta>0$. 
Therefore we have an estimate
${\epsilon}_{M+2}(z,m\tau)=O(m^{-M-2})$
and we can write down an asymptotic expansion of $r_m(z;\tau)$ in powers of $m^{-1}$: 
\begin{align*}
r_m(z;\tau)&=
-\sum\limits_{l=1}^M \frac{z^{l+2}}{(l+2)!} \psi^{(l+1)}(m \tau)+O(m^{-M-2})\\
&
=-\sum\limits_{l=1}^M \frac{z^{l+2}}{(l+2)!} 
\times \bigg\{\sum\limits_{j=0}^{M}  \frac{(j+l)!(-1)^{j+l} B_{j} }{(j)!(m\tau)^{l+j+1}}\bigg\}+O(m^{-M-2}).
\end{align*}
On the other hand, using the Euler-Maclaurin formula we establish the following asymptotic expansion, valid for any $s>1$:
\begin{align*}
\sum\limits_{m=N+1}^{\infty} m^{-s} \sim 
\sum\limits_{k\ge 0} \frac{(s)_{k-1}}{(k)!} B_{k} N^{1-s-k}, \;\;\; N\to +\infty. 
\end{align*}
Here $(s)_k:=s(s+1)\dots (s+k-1)$ is the Pochhammer symbol. 
From the above two formulas and \eqref{def_R_N} we obtain
an asymptotic expansion of $R_N(z;\tau)$ in powers of $N^{-1}$:
\begin{align*}
R_N(z;\tau)&=\sum\limits_{m=N+1}^{\infty} r_{m}(z;\tau) =-\sum\limits_{l=1}^M \frac{z^{l+2}}{(l+2)!} 
\\ & 
\times \bigg\{\sum\limits_{j=0}^{M}  \frac{(-1)^{j+l} B_{j} (j+l)!}{(j)!\tau^{l+j+1}}
\times \sum\limits_{k=0}^M \frac{(l+j+1)_{k-1}}{(k)!} B_{k} N^{-j-k-l}\bigg\}+O(N^{-M-1}).
\end{align*}
Next we note that $(j+l)!(l+j+1)_{k-1}=(l+j+k-1)!$, we introduce a new variable of summation $i=j+k+l$ and collect the terms in front of $N^{-i}$ (for $i=1,2,\dots,M$) to obtain
\begin{align*}
R_N(z;\tau)&=\sum\limits_{i=1}^M 
N^{-i} (i-1)! (-\tau)^{-i-1} \sum\limits_{l=1}^i 
\frac{z^{l+2}}{(l+2)!} 
\sum\limits_{k=0}^{i-l}  \frac{ (-\tau)^{k} B_{k}B_{i-l-k}}{(k)!
(i-l-k)!}
  +O(N^{-M-1})\\
  &=
  z^3\sum\limits_{i=1}^M 
N^{-i} (i-1)! (-\tau)^{-i-1} \sum\limits_{l=1}^i 
\frac{z^{l-1}}{(l+2)!(i-l)!} q_{i-l}(-\tau) 
  +O(N^{-M-1})\\
  &=z^3 \sum \limits_{i=1}^M \frac{(- \tau)^{-i-1}P_i(z;-\tau)}{i(i+1)(i+2)} \times N^{-i}
+O(N^{-M-1}).
\end{align*}
The above result and \eqref{G_G_N_formula} give us the desired formula \eqref{eqn:compute_G}. 
\end{proof}

To illustrate how Theorem \ref{thm_main1} may be used to numerically compute  the values of $G(z;\tau)$, we perform the following experiments. We set $\tau=\sqrt{3}$ and compute $C(\tau)$ and $D(\tau)$ via \eqref{def_C} and \eqref{def_D} with $m=1000$. For values  $z\in \{1, \sqrt{3}\}$ we compute the values of $\{P_n(z;\tau)\}_{1\le n \le 10}$ via recursion \eqref{Pk_recursion}. Then we set $M=10$ and $N=1000$ in \eqref{eqn:compute_G} and compute the values 
\begin{align*}
    G(1;\sqrt{3})&=0.9999999999999999999999999999992..., \\
    G(\sqrt{3};\sqrt{3})&=1.4889283353650864545337314811487....
\end{align*}
To perform these computations we used Matlab and variable-precision arithmetic functions (with a working accuracy of 50 digits).  These values are correct to within $10^{-30}$, as can be checked via the exact evaluations $G(1;\tau)=1$ and 
 \begin{equation}
 \label{eqn:G(tau;tau)}
 G(\tau;\tau)=(2\pi)^{(\tau-1)/2}\tau^{-1/2},
 \end{equation}
 which follows from \eqref{eq:G_1_over_tau}.  

Our second goal is to discuss the asymptotic expansion of the double gamma function $G(z;\tau)$ as $z\to \infty$ (for fixed $\tau$). 
Billingham and King \cite{Bill1997}  found the first five terms of the asymptotic expansion (they also considered asymptotic expansions for small and large values of $\tau$ and for small values of $z$). Their result can be stated as follows:  as $z\to \infty$  we have
\begin{align}\label{eqn_lnG_E}
\ln G(z;\tau)&=(a_2(\tau) z^2+a_1(\tau) z + a_0(\tau)) \ln(z)\\
\nonumber
&+b_2(\tau) z^2+ b_1(\tau) z + b_0(\tau)
+ {\mathcal E}(z;\tau),
\end{align}
where 
\begin{align}
\nonumber
a_0(\tau)&=\frac{\tau}{12}+\frac{1}{4}+\frac{1}{12 \tau}, \\
\nonumber
a_1(\tau)&=-\frac{1}{2} \Big( 1 + \frac{1}{\tau} \Big),\\
\label{eqn_a0_a1_a2_b0_b1}
a_2(\tau)&=\frac{1}{2\tau}, \\
\nonumber
b_1(\tau)&=\frac{1}{2} \Big(\Big(\frac{1}{\tau}+1\Big)(1+\ln(\tau))+\ln(2\pi) \Big),\\
\nonumber
b_2(\tau)&=-\frac{1}{2\tau} \Big( \frac{3}{2}+ \ln(\tau)\Big),
\end{align}
and ${\mathcal E}(z;\tau)=O(1/z)$. The domain where this asymptotic expansion holds is not stated explicitly in 
\cite{Bill1997}, as the authors were mostly interested in real values of $z$ and $\tau$, however from the derivation of this asymptotic result one can conclude that it holds for fixed $\tau \in {\mathbb C} \setminus (-\infty,0]$ and large values of $z$ in the domain ${\mathbb C}\setminus \{x+\tau y \, : \, x,y\le 0\}$ (the complex plane minus the cone containing the roots of the double gamma function $G(z;\tau)$). The value of $b_0(\tau)$ was not found explicitly in   \cite{Bill1997}: as we show in the next result, $b_0(\tau)$ is  a rather complicated transcendental function, unlike the other coefficients $a_i(\tau)$ and $b_i(\tau)$.

\begin{proposition}\label{proposition_b0}
 Assume that $\tau \in {\mathbb C}\setminus(-\infty,0]$. 
\begin{itemize}
\item[(i)] The function $b_0(\tau)$ is equal to
\begin{align}\label{formula_b0}
    b_0(\tau)&=\int_0^1  \int_0^1 \ln G(x + \tau y; \tau) d x d y - a_0(\tau) \ln(\tau)-\frac{1}{4}(\tau+1) \ln(2\pi) \\
    \nonumber
    &=\frac{1}{3} \Big\{ \ln \Big[G\Big(\frac{1}{2};\tau\Big)^2 G(\tau;2\tau )    \Big] -
\frac{1+\tau}{2}\ln(2\pi)-a_0(\tau) \ln(\tau^3/2) - \ln(2) \Big\}.
\end{align}
It can be explicitly evaluated at $\tau=1$: 
\begin{equation}\label{eqn_b0_1}
b_0(1)=\frac{1}{12}-\ln(A)-\frac{1}{2} \ln (2\pi), 
\end{equation}
where $A=1.282427129...$  is the Glaisher-Kinkelin constant. 
\item[(ii)] The function $b_0(\tau)$ satisfies the following identities: 
\begin{align}
\label{b0_identity1}
    b_0(\tau^{-1})&=b_0(\tau)+\frac{\ln(\tau)}{12 \tau} (1+15 \tau + \tau^2),\\
    \nonumber
    b_0(\tau)&=b_0(1+\tau)+b_0(1+1/\tau)
    \\ \label{b0_identity2} &+\frac{1}{2}\ln ( 
2\pi (1+\tau)^3) - \Big(17+\frac{1}{\tau(1+\tau)}\Big) \ln(\tau), \\
\nonumber
\label{b0_identity3}
b_0(p \tau/q)&=pq \Big(b_0(\tau)+a_0(\tau) \ln(\tau)\Big)-a_0(p \tau/q) \ln(p \tau) \\&+\Big(p-1+(q-1)(p(\tau+1)+1)\Big) \frac{1}{4} \log(2\pi) + \frac{1}{2}\ln(q)
\\
\nonumber 
&\qquad -\sum\limits_{i=0}^{p-1} \sum\limits_{j=0}^{q-1} \ln G((1+i)/p+j\tau/q;\tau), \;\;\; p,q \in {\mathbb N}.
\end{align}
\end{itemize}
\end{proposition}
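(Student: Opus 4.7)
My plan is to prove all six statements via a uniform strategy: take the logarithm of one of the transformation identities \eqref{eq:G_1_over_tau}, \eqref{G_z_p/q_formula2}, \eqref{G_z_pz}, or \eqref{eq:modular}, expand both sides using the asymptotic formula \eqref{eqn_lnG_E} as $z\to\infty$, and equate the $z^0$ coefficients that do not multiply $\ln z$. The key preliminary observation is that the $z^0$ coefficient of $\ln G(z+c;\tau)$ equals $b_0(\tau)+\tfrac32 a_2(\tau)c^2+a_1(\tau)c+b_2(\tau)c^2+b_1(\tau)c$, obtained from $\ln(z+c)=\ln z+c/z-c^2/(2z^2)+O(z^{-3})$; the analogous coefficient for $\ln G(z/p+c;\tau)$ carries additional logarithmic shifts $-[a_2(\tau)c^2+a_1(\tau)c+a_0(\tau)]\ln p$ from $\ln(z/p+c)=\ln z-\ln p+O(1/z)$.

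For the first expression in \eqref{formula_b0}, I would apply this to \eqref{G_z_pz}. Matching $z^0$ coefficients produces an identity
\[
\sum_{i,j=0}^{p-1}\ln G\!\left(\tfrac{1+i+j\tau}{p};\tau\right)=(p^2-1)b_0(\tau)+(\text{polynomial in }p,\tau)+(\text{explicit log terms}).
\]
Dividing by $p^2$ and letting $p\to\infty$, the left side becomes the Riemann sum $I(\tau):=\int_0^1\int_0^1 \ln G(x+\tau y;\tau)\,dx\,dy$; the moments $\tfrac1{p^2}\sum c_{ij}$ and $\tfrac1{p^2}\sum c_{ij}^2$ (with $c_{ij}=(i+j\tau)/p$) converge to $(1+\tau)/2$ and $(1+\tau^2)/3+\tau/2$; and after substituting \eqref{eqn_a0_a1_a2_b0_b1} the non-logarithmic polynomial pieces cancel, leaving $I(\tau)=b_0(\tau)+a_0(\tau)\ln\tau+\tfrac{1+\tau}{4}\ln(2\pi)$. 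For the second expression I would, self-referentially, apply \eqref{G_z_pz} with $p=2$ \emph{inside the integrand} of $I(\tau)$ after rewriting it as an integral over $[0,\tfrac12]^2$ of a four-fold product of $G$-factors at half-period shifts; the change of variables $u=2x,v=2y$ converts the $\ln G(2x+2\tau y;\tau)$ term back into $\tfrac14 I(\tau)$, producing a self-referential equation solvable for $I(\tau)$ in terms of $\ln[G(1/2;\tau)G(\tau/2;\tau)G((1+\tau)/2;\tau)]$. The identity $G(\tau/2;\tau)G((1+\tau)/2;\tau)=G(1/2;\tau)G(\tau;2\tau)$, obtained from \eqref{G_z_p/q_formula3} at $p=2$, $z=\tau/2$, then rewrites this as $\ln[G(1/2;\tau)^2 G(\tau;2\tau)]$. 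For \eqref{eqn_b0_1} I would specialize the second expression at $\tau=1$, observe that $G(1;2)=G(1;1)=1$ (from \eqref{G_z_p/q_formula3} with $p=2$, $z=1/2$, $\tau=1$), and invoke the classical evaluation $G(1/2;1)=2^{1/24}e^{1/8}\pi^{-1/4}A^{-3/2}$.

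Part (ii) applies the same matching recipe with different transformation formulas: \eqref{b0_identity1} from \eqref{eq:G_1_over_tau}, using the symmetry $a_0(\tau)=a_0(\tau^{-1})$ so that the coefficient of $\ln\tau$ becomes $1+a_0(\tau)=(\tau^2+15\tau+1)/(12\tau)$; \eqref{b0_identity2} from \eqref{eq:modular}, where the prefix factors $((1+\tau)/\tau)^{\cdots}(2\pi)^{-z/(2\tau)}$ contribute $\tfrac12\ln(2\pi(1+\tau)^3)$ to the constant and the coefficient of $-\ln\tau$ becomes $1+a_0(1+1/\tau)$; and \eqref{b0_identity3} from \eqref{G_z_p/q_formula2}, by matching constant terms exactly as in the first line of \eqref{formula_b0} but now retaining $p$ and $q$ finite and with $p\tau/q$ as the modulus on the left. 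The main obstacle throughout is the algebraic bookkeeping: for each identity one must verify that the non-logarithmic polynomial-in-$\tau$ contributions assembled from $a_0,a_1,a_2,b_1,b_2$ cancel exactly, leaving only the $\ln\tau$, $\ln(2\pi)$, $\ln q$, and $\ln(1+\tau)$ terms stated in the proposition — a mechanical but delicate verification.
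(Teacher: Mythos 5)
Your proposal is correct and follows essentially the same route as the paper: expand the logarithms of the transformation identities via \eqref{eqn_lnG_E}, match the constant (non-$\ln z$) terms using exactly the shift formula you state (your $z^0$ coefficient of $\ln G(z/\alpha+\beta;\tau)$ is the paper's ${\mathcal C}(\alpha,\beta;\tau)$), and obtain the integral formula as the $p=q\to\infty$ Riemann-sum limit of the resulting multiplication identity. Two of your sub-arguments genuinely differ in packaging. First, for the second line of \eqref{formula_b0} the paper simply sets $q=2$ in the finite identity and divides by $q^2-1=3$, whereas you set up a self-referential equation $I(\tau)=4I(\tau)+\cdots$ by inserting \eqref{G_z_pz} into the integrand; this is algebraically the same computation (the factor $3$ is the same $q^2-1$) but avoids re-deriving the finite identity, and both routes end with the same two reductions $G(1+\tau/2;\tau)=\sqrt{2\pi}\,G(\tau/2;\tau)$ and $G(\tau/2;\tau)G((1+\tau)/2;\tau)=G(1/2;\tau)G(\tau;2\tau)$. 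Second, for \eqref{eqn_b0_1} the paper cites Ferreira--L\'opez, while you derive it from the second line of \eqref{formula_b0} at $\tau=1$ together with the classical value $G(1/2)=2^{1/24}e^{1/8}\pi^{-1/4}A^{-3/2}$; I checked that this does reproduce $b_0(1)=\tfrac1{12}-\ln A-\tfrac12\ln(2\pi)$, so your route is self-contained modulo that classical input. One caution on the ``delicate bookkeeping'' you defer: your coefficient $1+a_0(1+1/\tau)$ for \eqref{b0_identity2} is the correct output of the derivation, and it equals $\tfrac1{12}\bigl(17+\tfrac{1}{\tau(1+\tau)}\bigr)$, not $17+\tfrac{1}{\tau(1+\tau)}$ as displayed in the proposition (compare with \eqref{b0_identity1}, where the analogous factor $\tfrac1{12}$ is present); do not be alarmed when your verification disagrees with the displayed constant by that factor.
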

\begin{proof}
Let us prove the results in item (ii). 
To prove formula \eqref{b0_identity1} we take logarithm of both sides of  \eqref{eq:G_1_over_tau} and consider the first six terms of the asymptotic expansion as $z\to \infty$ from 
\eqref{eqn_lnG_E}: 
\begin{align*}
    &(a_2(\tau) z^2+a_1(\tau) z + a_0(\tau)) \ln(z)+b_2(\tau) z^2+ b_1(\tau) z + b_0(\tau)+O(1/z)
    \\&=
    \frac{z}{2} \Big( 1-\frac{1}{\tau}\Big) \ln(2\pi) + \Big(\frac{z-z^2}{2\tau}+\frac{z}{2}-1\Big) \ln(\tau) \\
    &\qquad \qquad+
       \Big(a_2(1/\tau) (z/\tau)^2+a_1(1/\tau) (z/\tau) + a_0(1/\tau)\Big) \ln(z/\tau)\\
       &\qquad \qquad+b_2(1/\tau) (z/\tau)^2+ b_1(1/\tau) (z/\tau) + b_0(1/\tau)+O(1/z). 
\end{align*}
Using formulas \eqref{eqn_a0_a1_a2_b0_b1} the above equation can be simplified to 
$$
b_0(\tau)+O(1/z)=-\frac{\ln(\tau)}{12 \tau} (1+15 \tau + \tau^2)+
b_0(1/\tau)+O(1/z). 
$$
This simplification can be checked with symbolic computation software. The above equation implies \eqref{b0_identity1}.

Formula \eqref{b0_identity2} is established in the same way. 
Using the first functional equation in \eqref{eq:funct_rel_G} we rewrite \eqref{eq:modular} in the form
  \begin{equation*}
  G(z;\tau)=\Big(\frac{1+\tau}{\tau}\Big)^{\frac{z^2}{2\tau}-(1+\tau) \frac{z}{2\tau}+1}
  (2\pi)^{-\frac{z}{2\tau}}\Gamma\Big(\frac{z}{1+\tau}\Big)G(z;1+\tau)G(z/\tau;1+1/\tau). 
 \end{equation*}
Taking logarithms of both sides and considering the constant term in the asymptotic expansion as $z\to \infty$ (and using Stirling's asymptotic formula for $\ln \Gamma(z/(1+\tau))$ we  obtain \eqref{b0_identity2}. 

To prove formula \eqref{b0_identity3} we will compute the constant term in the asymptotic expansion as $z\to \infty$ of the logarithm of the right-hand side in \eqref{G_z_p/q_formula2}. 
Let $\alpha$ and $\beta$ be positive numbers. According to \eqref{eqn_lnG_E}
\begin{align*}
     \ln G(z/\alpha+\beta;\tau)&=
     \Big(a_2(\tau) (z/\alpha+\beta)^2 +a_1(\tau) (z/\alpha+\beta) + a_0(\tau)\Big) \ln(z/\alpha+\beta)\\
&+b_2(\tau) (z/\alpha+\beta)^2+ b_1(\tau) (z/\alpha+\beta) + b_0(\tau)+O(1/z).
\end{align*}
Using the Taylor series for the logarithm function we find 
\begin{align*}
    \ln(z/\alpha+\beta)&= \ln (z) - \ln(\alpha) + \ln(1+\alpha \beta/z)\\
     &=\ln (z) - \ln(\alpha) + \frac{\alpha \beta}{z}-\frac{(\alpha \beta)^2}{2z^2}+O(1/z^3) 
\end{align*}
Combining the above two formulas and simplifying the result  we find that the constant term in the asymptotic expansion of 
$\ln G(z/a+b;\tau)$ as $z\to \infty$ is 
\begin{align*}
  &  {\mathcal C}(\alpha,\beta;\tau):=-(a_2(\tau) \beta^2 +a_1(\tau) \beta + a_0(\tau))\ln(\alpha)\\
  &+2 a_2(\tau) \beta^2+ a_1(\tau)\beta-a_2(\tau) \beta^2/2+b_2(\tau) \beta^2 +b_1(\tau) \beta + b_0(\tau).
\end{align*}
Using the above formula and the identities 
\begin{align*}
    \sum\limits_{i=0}^{p-1} \sum\limits_{j=0}^{q-1} 
    (i/p+ j \tau/q)&=\frac{1}{2} ( q(p-1) + \tau p (q-1)), \\
    \sum\limits_{i=0}^{p-1} \sum\limits_{j=0}^{q-1} 
    (i/p+ j \tau/q)^2&=\frac{q (p-1) (2p-1)}{6 p}+ \frac{\tau}{2} (p-1)(q-1)
    +\frac{\tau^2 p (q-1)(2q-1)}{6q}
\end{align*}
we compute 
\begin{align*}
    \sum\limits_{i=0}^{p-1} \sum\limits_{j=0}^{q-1} C(p, i/p+j \tau/q; \tau)&=
    pq(b_0(\tau)+a_0(\tau)\ln(\tau))-a_0(p\tau/q)\ln(p \tau)\\
    &+(p-1+(q-1)(p(\tau+1)-1))\ln(2\pi)/4.
\end{align*}
This last computation is easy to verify by symbolic computation software. 
Next, we find that the constant term in the asymptotics (as $z\to \infty$) of
$$
\ln \Big[ p^{\frac{1}{2  \tau}(pz-1)(pz- \tau) }  (2\pi)^{-\frac{1}{2}(p-1)(pz-1)} \Big]
$$
is $\frac{1}{2} (q-1) \ln(2\pi)
+\frac{1}{2} \ln(q)$. Combining the above two formulas we find that the constant term in the asymptotic expansion of the logarithm of the right-hand side of  
\eqref{G_z_p/q_formula2} is 
\begin{align*}
  &  pq \Big(b_0(\tau)+a_0(\tau) \ln(\tau)\Big)-a_0(p \tau/q) \ln(p \tau) +\Big(p-1+(q-1)(p(\tau+1)+1)\Big) \frac{\ln(2\pi)}{4}  \\
  &+ \frac{1}{2}\ln(q) -\sum\limits_{i=0}^{p-1} \sum\limits_{j=0}^{q-1} \ln G((1+i)/p+j\tau/q;\tau), 
\end{align*}
and according to \eqref{G_z_p/q_formula2} this expression must be equal to 
$b_0(p \tau/q)$ -- the constant term in the asymptotic expansion of $\ln G(z;p \tau/q)$. This concludes the proof of \eqref{b0_identity3}.

Now we can prove formulas \eqref{formula_b0}. We set $p=q$  and conclude from
\eqref{b0_identity3} that 
\begin{align}
\nonumber
\label{b_0_proof2}
    b_0(\tau)&= 
    \frac{1}{q^2-1} 
    \bigg[ - q^2 a_0(\tau) \ln(\tau)+a_0(\tau) \ln(q \tau) \\&-\Big(q-1+(q-1)(q(\tau+1)+1)\Big) \frac{1}{4} \log(2\pi) -\frac{1}{2}\ln(q)
\\
\nonumber 
&\qquad +\sum\limits_{i=0}^{p-1} \sum\limits_{j=0}^{q-1} \ln G((1+i)/p+j\tau/q;\tau)\bigg].
\end{align}
Taking the limit as $q\to \infty$ we obtain the first equation in \eqref{formula_b0}. To prove the second equation in \eqref{formula_b0}
we set $q=2$ in \eqref{b_0_proof2}. The resulting term 
$$
\ln \Big[ G(1/2;\tau) G((1+\tau)/2;\tau) G(1+\tau/2;\tau) \Big]
$$
is simplified  using the fact
$$
G(1+\tau/2;\tau)=\Gamma(1/2) G(\tau/2;\tau)=\sqrt{2\pi }
G(\tau/2;\tau)
$$
and using the formula \eqref{G_z_p/q_formula3} with $p=2$, $z=\tau/2$ written in the form
\begin{equation*}
    G(\tau/2;\tau)G((1+\tau)/2;\tau)=G(1/2;\tau)G(\tau;2\tau).
 \end{equation*}

 Formula \eqref{eqn_b0_1} follows from Theorem 1 in \cite{Ferreira_Lopez}, where an asymptotic expansion for $G(z;1)$ is presented. 
\end{proof}

In our next result we present a complete 
asymptotic expansion of the error term ${\mathcal E}(z;\tau)$ from \eqref{eqn_lnG_E}. This was essentially obtained by Barnes in 1901 in part V of \cite{Barnes1901}: he  derived a complete asymptotic expansion for the symmetric double gamma function $\Gamma_2(z;\omega_1, \omega_2)$.
 Unfortunately, this asymptotic expansion of $\Gamma_2(z;\omega_1, \omega_2)$ is not stated as a theorem, rather the result is developed over a dozen or so pages of text and it uses a lot of auxiliary constants and functions, whose definitions are scattered across a very long paper. Thus, we felt it is important to present Barnes' result in a concise form and adapted to the double gamma function $G(z;\tau)$. To state this result, we denote by  
$$
{\mathcal S}_{\alpha,\beta}:=\{ z \in {\mathbb C}\; : \; z\neq0, \; \alpha\le \arg(z) \le \beta \}
$$
a sector in the complex plane bounded by half-lines with angles $\alpha$ and $\beta$.

\begin{theorem}[Barnes, \cite{Barnes1901}]
\label{thm2}
Let $\tau \in {\mathbb C} \setminus (-\infty,0]$ be fixed.  Let $\alpha,\beta$ be two numbers such that $-\pi < \alpha<\beta<\pi$ for which  
\begin{equation*}
{\mathcal S}_{\alpha,\beta} \subset 
{\mathbb C}\setminus \{x+\tau y \, : \, x,y\le 0\}
\end{equation*}
 and $\alpha, \beta \neq \arg(-\tau)$. 
  Then as $z\to \infty$ in the domain ${\mathcal S}_{\alpha,\beta}$ we have an asymptotic expansion
\begin{equation}\label{asymptotics_E}
{\mathcal E}(z;\tau) \sim \frac{1}{\tau} \sum\limits_{n\ge 1} \frac{(-1)^{n+1}  q_{n+2}(\tau)}{n(n+1)(n+2)} z^{-n}.
\end{equation}
The above asymptotic result is uniform in ${\mathcal S}_{\alpha,\beta}$.
\end{theorem}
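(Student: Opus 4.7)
The plan is to start from the integral representation \eqref{lnG_integral}, which is valid for $\re(z),\re(\tau)>0$, apply Watson's lemma to obtain the expansion in the right half-plane, and then extend to the full sector $\mathcal{S}_{\alpha,\beta}$ by iterating the functional equations \eqref{eq:funct_rel_G}.

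The key algebraic step is to expand the factor $1/[(1-e^{-x})(1-e^{-\tau x})]$ in a Laurent series at $x=0$. Since $\frac{x}{1-e^{-x}} = \sum_{n\ge 0}(-1)^n B_n x^n/n!$ (this is \eqref{Bn_gen_function} after $u\mapsto -x$), multiplying two such series and comparing with \eqref{def_qn} yields
$$\frac{1}{(1-e^{-x})(1-e^{-\tau x})} = \frac{1}{\tau x^2}\sum_{n\ge 0}\frac{(-1)^n q_n(\tau)}{n!}x^n.$$
Substitute this into the $-e^{-zx}$-piece of the integrand in \eqref{lnG_integral}; term-by-term integration using $\int_0^\infty x^{n-3}e^{-zx}\,dx = (n-3)!/z^{n-2}$ for $n\ge 3$ (justified by Watson's lemma) produces
$$-\frac{1}{\tau}\sum_{n\ge 3}\frac{(-1)^n q_n(\tau)}{n(n-1)(n-2)}z^{-(n-2)} \;=\; \frac{1}{\tau}\sum_{m\ge 1}\frac{(-1)^{m+1}q_{m+2}(\tau)}{m(m+1)(m+2)}z^{-m},$$
which is exactly \eqref{asymptotics_E}, with $O(z^{-M-1})$ remainder uniform in any closed sub-sector $|\arg z|\le\pi/2-\delta$. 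The singular $n=0,1,2$ contributions must cancel against the three auxiliary $z$-dependent terms $-ze^{-\tau x}/(1-e^{-\tau x})$, $(z-1)(z/(2\tau)-1)e^{-\tau x}$, and the $z$-independent pieces in \eqref{lnG_integral}; together they reproduce the polynomial/logarithmic part $(a_2 z^2+a_1 z+a_0)\ln z + b_2 z^2 + b_1 z + b_0$ with the explicit coefficients from \eqref{eqn_a0_a1_a2_b0_b1} and Proposition~\ref{proposition_b0}.

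To extend to the full sector $\mathcal{S}_{\alpha,\beta}$, I iterate the functional equations \eqref{eq:funct_rel_G}: each shift by $1$ or $\tau$ introduces a $\ln\Gamma(z/\tau)$ or $\ln\Gamma(z)$ factor whose Stirling expansion is manifestly compatible with the ansatz \eqref{eqn_lnG_E}, so the expansion for $\ln G(z;\tau)$ propagates up to---but not across---the direction $\arg z=\arg(-\tau)$ where the zeros of $G(z;\tau)$ accumulate. The principal obstacle is the bookkeeping in the paragraph above: one has to check that the three singular Laurent contributions combine with the auxiliary pieces of \eqref{lnG_integral} to produce exactly the known closed-form coefficients $a_j(\tau)$ and $b_j(\tau)$. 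This reduces to a Bernoulli-number calculation that is organized efficiently by the summation identities \eqref{sum_identity1}--\eqref{sum_identity4} of Proposition~\ref{proposition1}; once this matching is carried out, the remainder of the argument is routine.
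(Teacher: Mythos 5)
Your route is genuinely different from the paper's. The paper does not recompute the expansion: it quotes Barnes' asymptotic formula for the symmetric function $\Gamma_2(z;1,\tau)$, identifies Barnes' coefficients ${}_2S_n'(0)$ with $q_{n+1}(\tau)/(\tau(n+1))$ by comparing generating functions, and transfers the result to $G(z;\tau)$ via \eqref{G_as_Gamma1}. You instead derive the expansion directly from the integral representation \eqref{lnG_integral}. Your central computation is correct: setting $u=-x$ in \eqref{qn_generating_function} does give $\bigl[(1-e^{-x})(1-e^{-\tau x})\bigr]^{-1}=(\tau x^2)^{-1}\sum_{n\ge0}(-1)^n q_n(\tau)x^n/n!$, and term-by-term integration of the $n\ge3$ part against $e^{-zx}\,dx/x$ reproduces \eqref{asymptotics_E} exactly, with the $n=0,1,2$ singular pieces accounting for the $(a_2z^2+a_1z+a_0)\ln z+b_2z^2+b_1z+b_0$ part. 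The matching of those singular pieces, which you defer, can in fact be finessed: once you know the regularized integral has a full asymptotic expansion of the form quadratic-times-$\ln z$ plus quadratic plus the tail, uniqueness of asymptotic expansions together with the Billingham--King result \eqref{eqn_lnG_E} forces the identification, so ${\mathcal E}$ is the tail. This is a self-contained argument the paper does not give, and it explains \emph{why} the $q_n$ appear.

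The genuine gap is the extension from $\re(z)>0$ to the full sector ${\mathcal S}_{\alpha,\beta}$. Iterating the functional equations \eqref{eq:funct_rel_G} shifts $z$ by a \emph{fixed} amount $m\tau+n$; for $z\to\infty$ along a ray with, say, $\arg z=3\pi/4$, no bounded number of such shifts brings $z$ into the right half-plane, and if you let $m,n$ grow with $|z|$ you must control the asymptotics of a product of $O(|z|)$ gamma factors, which is essentially the original problem again. So "propagates up to the direction $\arg z=\arg(-\tau)$" is not justified as written. The standard repair, consistent with the Remark following the theorem, is to rotate the contour in \eqref{lnG_integral} to the ray $\arg x=\theta$ (admissible whenever $|\theta|<\pi/2$ and $|\arg\tau+\theta|<\pi/2$, since the integrand is analytic and decaying in the intervening sector) and reapply Watson's lemma there; letting $\theta$ range over its admissible interval yields uniform validity precisely for $\arg z$ in the open arc complementary to the closed cone spanned by $\arg(-1)$ and $\arg(-\tau)$, i.e., in every closed sub-sector ${\mathcal S}_{\alpha,\beta}$ allowed by the hypotheses. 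With that substitution for your last paragraph, the proof is complete.
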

 \begin{figure}[t!]
\centering
\includegraphics[width=0.5\textwidth]{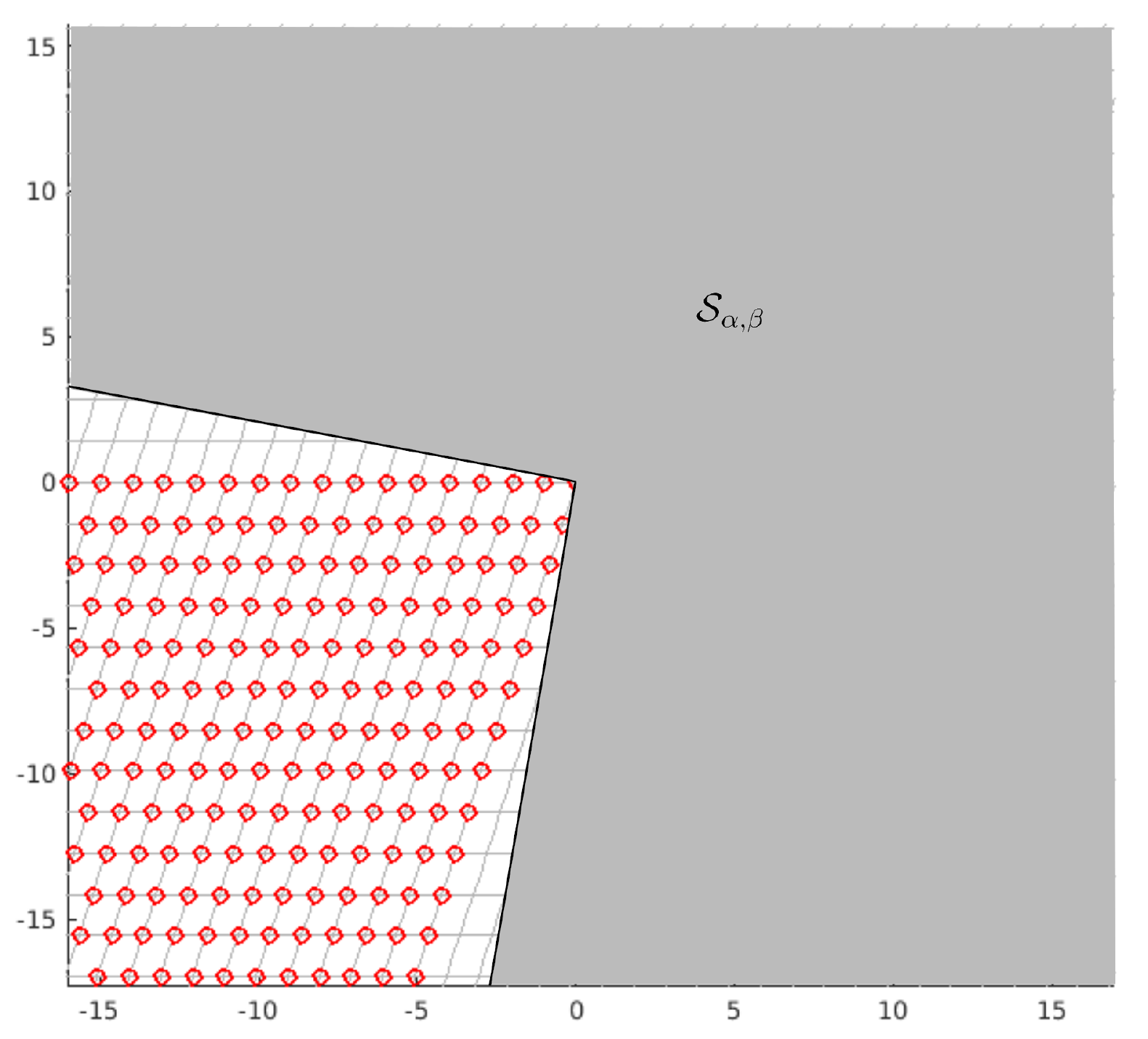}
\caption{The red circles are the zeros of $G(z;\tau)$, given by $\{m\tau+n \; : \; m,n \le 0\}$ and the shaded area shows the sector ${\mathcal S}_{\alpha,\beta} $ in which asymptotic result \eqref{asymptotics_E} holds true.  }
 \label{fig2}
\end{figure}

\begin{remark} (Added in October 2023)
The statement of the above theorem is changed here, compared with what appeared in the published version of this paper (see  \url{https://doi.org/10.1080/10652469.2023.2238115}). In the published version it was mistakenly stated that the asymptotic result \eqref{asymptotics_E} holds in a bigger domain ${\mathbb C}\setminus \{x+\tau y \, : \, x,y\le 0\}$, but it turns out that it is necessary to take a strictly smaller sector (see Figure \ref{fig2}). The problem is that the asymptotics in \eqref{asymptotics_E} would not be correct if $z$ was allowed to approach infinity along a path that comes too close to the zeros of $G(z;\tau)$, for example, along a line parallel to negative half line $(-\infty,0]$ or to the half line $\{ \tau y \; : \; y\le 0\}$. The proof that \eqref{asymptotics_E} is correct in the sector ${\mathcal S}_{\alpha,\beta}$ that satisfies the conditions of Theorem \ref{thm2} and that this asymptotic relation is indeed uniform in this sector can be obtained from formulas 
(3.13), (3.18), (3.19) in \cite{RUIJSENAARS2000107} (and formula \eqref{G_as_Gamma1} above) using Watson's lemma. 
\end{remark}

\newpage 
\begin{proof}
 In this proof we will follow the notation from Barnes' paper 
\cite{Barnes1901}. 
We set $a=0$, $\omega_1=1$ and $\omega_2=\tau$ in the formula on page 384 in \cite{Barnes1901} -- this gives us an asymptotic expansion in the form
\begin{align*}
\ln \Big[ \frac{\Gamma_2(z;1,\tau) e^{2\pi \i (m+m') {}_2S_1'(0)}}{\rho_2(1,\tau)} \Big]=
&-\frac{z^2}{2} {}_2S_1^{(3)}(0) (\ln(z)-2(m+m') \pi \i -\frac{3}{2} )\\ 
&-\frac{z}{2} {}_2S_1^{(2)}(0)(\ln(z)-2(m+m') \pi \i - 1) \\
&-{}_2S_1'(0) (\ln(z)-2(m+m') \pi \i)
+\sum\limits_{n\ge 1} \frac{(-1)^n {}_2S_{n+1}'(0)}
{n(n+1) z^n}
\end{align*}
In this formula $m$ and $m'$ are certain integers, which in our case are both equal to zero, due to our choice of $\omega_1=1$ and $\omega_2=\tau \in {\mathbb C}\setminus (-\infty,0]$), see the discussion in \textsection 22 on pages 291-293 in \cite{Barnes1901}. 
The constant $\rho_2(1,\tau)$ is defined on page 334 in \cite{Barnes1901} (see also page 354). The coefficients ${}_2S_1^{(3)}(0)$, ${}_2S_1^{(2)}(0)$ and $\{{}_2S_{n}'(0)\}_{n\ge 1}$  (which implicitly depend on $\tau$) are defined via the generating function 
\begin{equation*}
\frac{z}{(1-e^{- z})(1-e^{-\tau z})}=
\frac{{}_2S_1^{(3)}(0)}{z}-
{}_2S_1^{(2)}(0)+\frac{{}_2S_1'(0)}{1!} z+\dots
+(-1)^{n-1} \frac{{}_2S_n'(0)}{n!} z^n+\dots,
\end{equation*}
which can be found on page 283 in \cite{Barnes1901}. Comparing the above series with \eqref{qn_generating_function} we see that 
$$
{}_2S_n'(0)=\frac{q_{n+1}(\tau)}{\tau (n+1)}.
$$
Combining the above equations with \eqref{eqn_lnG_E} and formula
\begin{equation*}
G(z;\tau)=\frac{(2\pi)^{z/2} 
\tau^{(1+\frac{1}{\tau})\frac{z}{2}-\frac{z^2}{2\tau} -1}}{\Gamma_2(z;1,\tau)}, 
\end{equation*}
which follows from \eqref{G_as_Gamma1},  we obtain \eqref{asymptotics_E}. 
\end{proof}

In our next result we present some new expressions for the gamma modular forms $C(\tau)$ and $D(\tau)$.

\begin{proposition}\label{Proposition_C_D_tau}
${}$
\begin{itemize}
\item[(i)] Assume that $\re(\tau)>0$. Then 
\begin{align}\label{eqn_C_integral}
    C(\tau)&=  \frac{1}{2\tau} \ln(2\pi) \\
    \nonumber &- 
    \int_0^{\infty} \bigg( 
    \frac{e^{-\tau x}}{(1-e^{-x})(1-e^{-\tau x})} 
    -\frac{e^{-x}}{\tau x} \Big( \frac{1}{1-e^{-x}}+1-\frac{\tau}{2} \Big) \bigg) d x
  ,\\ 
  \label{eqn_D_integral}
    D(\tau)&= \int_0^{\infty} \bigg( \frac{x e^{-\tau x}}{(1-e^{-x})(1-e^{-\tau x})}
    -\frac{e^{-x}}{\tau x} \bigg) d x.
\end{align}
\item[(ii)] For $\tau \in {\mathbb C} \setminus (-\infty, 0]$ we have
\begin{align}
\label{eqn_C_formula_G}
 C(\tau)&=-\frac{\tau-1}{2\tau} \ln(\tau)+\frac{1}{2} \ln(2\pi)-\frac{d}{d z} \ln G(z;\tau) \Big \vert_{z=\tau},\\
 \label{eqn_D_formula_G}
 D(\tau)&=-\frac{\ln(\tau)}{\tau}-\frac{d^2}{d z^2} \ln G(z;\tau) \Big \vert_{z=\tau}.
 \end{align}
\end{itemize}
\end{proposition}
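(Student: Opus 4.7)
The plan is to prove part (ii) first, by differentiating the product representation \eqref{eq_G2_inf_prod_Gamma}, and then to derive part (i) from (ii) using the integral representation \eqref{lnG_integral} together with Frullani's integral.

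For (ii) I would start from the logarithm of \eqref{eq_G2_inf_prod_Gamma},
\begin{equation*}
\ln G(z;\tau)=-\ln\tau-\ln\Gamma(z)+\tilde a(\tau)\tfrac{z}{\tau}+\tilde b(\tau)\tfrac{z^2}{2\tau^2}+\sum_{m\ge 1}\Big(\ln\Gamma(m\tau)-\ln\Gamma(z+m\tau)+z\psi(m\tau)+\tfrac{z^2}{2}\psi'(m\tau)\Big),
\end{equation*}
and differentiate term by term. The second derivative at $z=\tau$ contains the telescoping series $\sum_{m\ge 1}[-\psi'((m+1)\tau)+\psi'(m\tau)]$, whose partial sums equal $\psi'(\tau)-\psi'((N+1)\tau)\to\psi'(\tau)$; this cancels the $-\psi'(\tau)$ coming from $-\ln\Gamma(z)$ and leaves only $\tilde b(\tau)/\tau^2$. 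Combining \eqref{def_a_b} with \eqref{def_tilde_a_tilde_b} gives $\tilde b(\tau)=-\tau\ln\tau-\tau^2 D(\tau)$, and this immediately yields \eqref{eqn_D_formula_G}. For \eqref{eqn_C_formula_G} I would take only a single derivative; the telescoping is now incomplete because $\psi((N+1)\tau)$ diverges logarithmically, but this divergence is exactly cancelled by re-writing $\tau\sum_{m=1}^N\psi'(m\tau)$ using the asymptotic expansion \eqref{def_D} (kept to order $O(N^{-2})$) as $\tau D(\tau)-(\tau/2)\psi'((N+1)\tau)+\psi((N+1)\tau)+O(N^{-2})$. The combined series converges to $\psi(\tau)+\tau D(\tau)$, and substituting the formulas for $\tilde a(\tau)$ and $\tilde b(\tau)$ and simplifying then gives \eqref{eqn_C_formula_G}.

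For (i) I would use the identities just established. Differentiating the integrand of \eqref{lnG_integral} twice with respect to $z$ (only the terms $-e^{-zx}/[(1-e^{-x})(1-e^{-\tau x})]$ and $(z-1)(z/(2\tau)-1)e^{-\tau x}$ are $z$-dependent) and setting $z=\tau$ yields
\begin{equation*}
\frac{d^2}{dz^2}\ln G(z;\tau)\Big|_{z=\tau}=\int_0^\infty\bigg(\frac{e^{-\tau x}}{\tau x}-\frac{xe^{-\tau x}}{(1-e^{-x})(1-e^{-\tau x})}\bigg)\,dx,
\end{equation*}
so \eqref{eqn_D_formula_G} gives $D(\tau)=-\ln\tau/\tau+\int_0^\infty[xe^{-\tau x}/((1-e^{-x})(1-e^{-\tau x}))-e^{-\tau x}/(\tau x)]\,dx$. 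To reach the form \eqref{eqn_D_integral} I would invoke Frullani's integral $\int_0^\infty(e^{-x}-e^{-\tau x})x^{-1}\,dx=\ln\tau$, which replaces $e^{-\tau x}/(\tau x)$ by $e^{-x}/(\tau x)$ at the cost of $+\ln\tau/\tau$ --- exactly absorbing the leading $-\ln\tau/\tau$. The derivation of \eqref{eqn_C_integral} is analogous, starting from a single derivative of the integrand of \eqref{lnG_integral} at $z=\tau$ and using Frullani together with the elementary identity $(1-e^{-\tau x})^{-1}=1+e^{-\tau x}/(1-e^{-\tau x})$ to consolidate several divergent pieces into the convergent grouping in \eqref{eqn_C_integral}.

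The main obstacle throughout is the careful management of divergent quantities. On the integral side, each of $\int_0^\infty e^{-\tau x}/(\tau x)\,dx$ and $\int_0^\infty e^{-x}/(\tau x)\,dx$ diverges at $x=0$, so Frullani can be invoked only after the terms have been grouped into an absolutely convergent combination; this dictates the particular form of \eqref{eqn_C_integral}. On the series side, the boundary term $-\psi((N+1)\tau)$ surviving the telescoping diverges logarithmically and must be cancelled with matching precision against the expansion \eqref{def_D}; capturing the resulting finite constant $\tau D(\tau)$ requires retaining both the $\psi$ and $\psi'$ correction terms, not just the leading one.
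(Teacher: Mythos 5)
Your plan inverts the paper's logic: the paper proves \eqref{eqn_C_integral} first, directly from the definition \eqref{def_C} together with the classical integral representations of $\ln\Gamma$ and $\psi$, and only then obtains \eqref{eqn_C_formula_G} by adding the first $z$-derivative of \eqref{lnG_integral}; you instead get (ii) from term-by-term differentiation of \eqref{eq_G2_inf_prod_Gamma} and then push back to (i). The telescoping derivation of \eqref{eqn_D_formula_G} is clean and correct, and your handling of the divergent boundary term $\psi((N+1)\tau)$ via \eqref{def_D} for the first derivative is exactly right. However, the claimed simplification leading to \eqref{eqn_C_formula_G} does not close with the constants as printed. Your computation gives $\frac{d}{dz}\ln G(z;\tau)\big\vert_{z=\tau}=\tilde a(\tau)/\tau+\tilde b(\tau)/\tau+\tau D(\tau)$, and substituting \eqref{def_a_b} and \eqref{def_tilde_a_tilde_b} literally yields $C(\tau)=-\frac{\tau-1}{2\tau}\ln\tau+\frac12\ln(2\pi)-2\gamma-\frac{d}{dz}\ln G(z;\tau)\big\vert_{z=\tau}$: the term $\gamma\tau$ is subtracted once inside $a(\tau)$ and a second time in $\tilde a=a-\gamma\tau$. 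The spurious $-2\gamma$ is wrong -- at $\tau=1$ one has $C(1)=\tfrac12$ and $G'(1)/G(1)=\tfrac12\ln(2\pi)-\tfrac12$, which is consistent with \eqref{eqn_C_formula_G} but not with the $-2\gamma$ version -- so there is a normalization slip in the printed chain \eqref{def_a_b}--\eqref{def_tilde_a_tilde_b} that your route runs straight into and must first diagnose and repair. The paper's proof is immune because it never touches $a$, $b$, $\tilde a$, $\tilde b$; your $D$ computation also survives because only $\tilde b$ enters there and the $\gamma$'s live in $\tilde a$.

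The second gap is in the passage from \eqref{eqn_C_formula_G} to \eqref{eqn_C_integral}, which is not ``Frullani plus an elementary identity.'' After grouping, the difference between the two expressions for $C(\tau)$ contains $\int_0^{\infty}\big(\frac{1}{1-e^{-x}}-\frac12-\frac1x\big)\frac{e^{-x}}{x}\,dx$ and its $\tau$-scaled companion; the value $1-\frac12\ln(2\pi)$ of this Binet--Malmst\'en-type integral is precisely what reconciles the coefficient $\frac{1}{2\tau}$ of $\ln(2\pi)$ in \eqref{eqn_C_integral} with the coefficient $\frac12$ in \eqref{eqn_C_formula_G}. The paper evaluates it by writing it as $\Gamma(w)\zeta(w)-\frac12\Gamma(w)-\Gamma(w-1)$ for $\re(w)>2$ and letting $w\to0$ through the analytic continuation, using $\zeta(0)=-\frac12$ and $\zeta'(0)=-\frac12\ln(2\pi)$. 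Frullani only disposes of the pieces of the form $\int_0^{\infty}(e^{-x}-e^{-\tau x})x^{-1}\,dx=\ln\tau$ and cannot produce a $\ln(2\pi)$; you need to supply this zeta-regularized evaluation (or an equivalent) for the $C$ identity. Your derivation of \eqref{eqn_D_integral} from \eqref{eqn_D_formula_G} is fine as sketched, since there a single Frullani integral is the only regularization required.
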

\begin{proof}
To prove formula \eqref{eqn_C_integral}, we recall the following two integral representations (see formulas 8.341.7 and 8.361.1 in \cite{Jeffrey2007}), valid for $\re(z)>0$
\begin{align*}
    \ln \Gamma(z)&=\int_0^{\infty} \Big( (z-1) e^{-x}+ \frac{e^{-zx}-e^{-x}}{1-e^{-x}} 
    \Big) \frac{dx}{x}, \\
    \psi(z)&= \int_0^{\infty} \Big( \frac{e^{-x}}{x}-\frac{e^{-zx}}{1-e^{-x}} \Big) d x.
\end{align*}
Using these formulas and \eqref{def_C} (and the fact that $\psi^{(j)}(m\tau)=O(m^{-j})$) we obtain
\begin{align*}
C(\tau)&=\frac{1}{2\tau} \ln(2\pi)+
\int_0^{\infty} \bigg[\Big( m-\frac{1}{2}\Big) \frac{e^{-x}}{x}
-\frac{e^{-\tau x}-e^{-m \tau x}}{(1-e^{-\tau x})(1-e^{-x})}
\\
&-\frac{1}{2} \frac{e^{-m \tau x}}{1-e^{-x}}
-\Big(m-\frac{1}{\tau}\Big) \frac{e^{-x}}{x}-
\frac{e^{-m\tau x}-e^{-x}}{\tau x (1-e^{-x})} \bigg] d x+O(m^{-1})\\
&=\frac{1}{2\tau} \ln(2\pi)+
\int_0^{\infty} \bigg\{ \Big( \frac{1}{\tau}-\frac{1}{2} \Big) \frac{e^{-x}}{x} 
-\frac{e^{-\tau x}}{(1-e^{-\tau x})(1-e^{-x})}+
\frac{e^{-x}}{\tau x (1-e^{-x})} \bigg\} d x\\
&+\int_0^{\infty} \bigg\{ \frac{1}{(1-e^{-\tau x})(1-e^{-x})}-
\frac{1}{2(1-e^{-x})}- \frac{1}{\tau x (1-e^{-x})} \bigg\} e^{-m \tau x} d x
+O(m^{-1}). 
\end{align*}
One can check that the two functions in braces are bounded as $x \to 0$: this justifies the fact that we could rearrange the integral as is shown above and it also implies that the second integral in the above representation of $C(\tau)$ converges to zero as $m\to +\infty$. Thus in the limit $m\to +\infty$ the above formula gives us the desired result \eqref{eqn_C_integral}. 

Let us now establish \eqref{eqn_C_formula_G}. Assume that $\re(\tau)>0$. From \eqref{lnG_integral} we find 
$$
\frac{d}{dz} \ln G(z;\tau)\Big \vert_{z=\tau}=
\int_0^{\infty}      \bigg( \frac{x e^{-\tau  x}}{(1-e^{-x})(1-e^{-\tau x})}- \frac{e^{-\tau x}}{1-e^{-\tau x}}
     - \frac{1}{2\tau} e^{-\tau x} \bigg) \frac{dx}{x},
$$
which implies 
\begin{align}
\label{eqn_C_I}
&C(\tau) + \frac{d}{dz} \ln G(z;\tau)\Big \vert_{z=\tau}=
\frac{1}{2\tau} \ln(2\pi) + I  
\\
\nonumber
&=\frac{1}{2\tau} \ln(2\pi)+\int_0^{\infty} \bigg( \frac{e^{-x}}{\tau x} \Big( \frac{1}{1-e^{-x}}+1-\frac{\tau}{2} \Big) 
- \frac{e^{-\tau x}}{x(1-e^{-\tau x})}
     - \frac{1}{2\tau x} e^{-\tau x}\bigg) dx.
\end{align}
We separate the integral $I$ in the above formula as follows: 
\begin{align}\label{eqn_I_I1234}
I&=I_1-I_2+I_3+I_4\\
\nonumber
&=\int_0^{\infty} \Big(\frac{1}{1-e^{-x}}-\frac{1}{2}-\frac{1}{x} \Big) \frac{e^{-x}}{\tau x} d x- \int_0^{\infty} \Big(\frac{1}{1-e^{-\tau x}}-\frac{1}{2}-\frac{1}{\tau x} \Big) \frac{e^{- \tau x}}{x} d x\\
\nonumber
&+\int_0^{\infty} \frac{1}{\tau} \Big(e^{-x}(1+x(1-\tau))-e^{-\tau x}  \Big) x^{-2} d x+
\int_0^{\infty} \frac{1+\tau}{2 \tau}  \Big( e^{-x}-e^{-\tau x}\Big) x^{-1} d x.
\end{align}
To compute $I_1$ we evaluate for $\re(w)>2$
\begin{align*}
f_1(w)&= \int_0^{\infty} \Big(\frac{1}{1-e^{-x}}-\frac{1}{2}-\frac{1}{x} \Big) e^{-x} x^{w-1} d x\\
&=\int_0^{\infty} \frac{x^{w-1} d x}{e^{x} - 1}-
\frac{1}{2}\int_0^{\infty} e^{-x} x^{w-1} d x -\int_0^{\infty} e^{-x} x^{w-2} d x\\
&=\Gamma(w) \zeta(w)-\frac{1}{2} \Gamma(w) - \Gamma(w-1)\\
&=\Gamma(w+1) \Big( \frac{\zeta(w)-\frac{1}{2}}w - \frac{1}{w(w-1)} \Big),
\end{align*}
where we have used well known integral representation formulas for the Riemann zeta function
and for the Gamma function. The function in the right-hand side is analytic in $\re(w)>-1$ and it is clear that $(1/\tau) f_1(w) \to I_1$ as $w \to 0^+$. To compute this limit we use the fact that 
$$
\zeta(w)=-\frac{1}{2}-\frac{w}{2} \ln(2\pi) + O(w^2), \;\;\; w\to 0
$$
and we obtain 
\begin{equation}
    I_1=\frac{1}{\tau} \Big(1-\frac{1}{2} \ln(2\pi)\Big). 
\end{equation}
A change of variables $x\mapsto x/\tau$ gives us an identity $I_2=\tau I_1$. The third integral is computed via the same trick as above: we evaluate for $\re(w)>0$
\begin{align*}
f_3(w)&=\int_0^{\infty} \Big(e^{-x}(1+x(1-\tau))-e^{-\tau x}  \Big) x^{w-1} d x \\
&=
\Gamma(w)+(1-\tau) \Gamma(w+1)-\tau^{-w} \Gamma(w)\\
&=\Gamma(w+2) \Big( \frac{1-\tau^{-w}}{w(w+1)}+\frac{1-\tau}{w+1} \Big),
\end{align*}
observe that this function is analytic for $\re(w)>-2$
and taking the limit as $w\to -1$ we obtain 
$$
I_3=\frac{1}{\tau} \lim\limits_{w\to -1} f_3(w)=\frac{1}{\tau} (\tau-1-\tau \ln(\tau)). 
$$
Finally, with the help of Frullani integral \cite{Reyna} we evaluate 
$$
I_4=\frac{1+\tau}{2 \tau} \ln(\tau).
$$
Combining the above expressions for $I_1$, $I_2$, $I_3$ and $I_4$  we have 
$$
I=I_1-I_2+I_3+I_4=\frac{\tau-1}{2\tau} \ln\Big(\frac{2\pi}{ \tau}\Big), 
$$
and this formula together with \eqref{eqn_C_I} gives us \eqref{eqn_C_formula_G} for $\re(\tau)>0$. The fact that this result holds in the larger domain $\tau \in {\mathbb C} \setminus (-\infty,0]$ follows by analytic continuation.

The proof of \eqref{eqn_D_integral} and \eqref{eqn_D_formula_G} follows exactly the same strategy as the proof of \eqref{eqn_C_integral} and \eqref{eqn_C_formula_G} and is left to the reader. 
\end{proof}

At page 366 in \cite{Barnes1899} Barnes has established a reflection formula for $D(\tau)$. Let $K=K(k)$ ($E=E(k)$) be the complete elliptic integral of the first (respectively, second) kind and
as usual we denote $K'=K(k')$, where $k'=\sqrt{1-k^2}$. Assume that the imaginary part of $\tau=\i K'/K$ is positive. Then the following identity is true
\begin{equation}
    \label{eqn:D_tau_D_minus_tau}
D(\tau)+D(-\tau)=\frac{\pi^2}{6}\Big(\frac{1}{\tau^2}-1\Big)-\frac{\pi \i}{\tau} 
+2 E K- 
\frac{2}{3} K^2 (1+k'^2).
\end{equation}
In the above formula we have corrected a typo from \cite{Barnes1899}: Barnes had a term  $\pi^2/12$ instead of $\pi^2/6$ in the above formula. The mistake appears in the third line of the equation on top of page 365 in \cite{Barnes1899}: both of the infinite sums appearing on that line should be multiplied by a factor of two. We have also verified numerically that the identity
\eqref{eqn:D_tau_D_minus_tau} is correct. It would be interesting to establish a similar reflection formula for $C(\tau)$.

%**************************************************************************************************
%**************************************************************************************************
%**************************************************************************************************

\section{Proofs of identities 
\eqref{eq:G_1_over_tau}, \eqref{G_z_p/q_formula2}
and \eqref{eq:modular}}\label{section_proofs}

%**************************************************************************************************
%**************************************************************************************************
%**************************************************************************************************

 \begin{figure}[t!]
\centering
\includegraphics[width=0.5\textwidth]{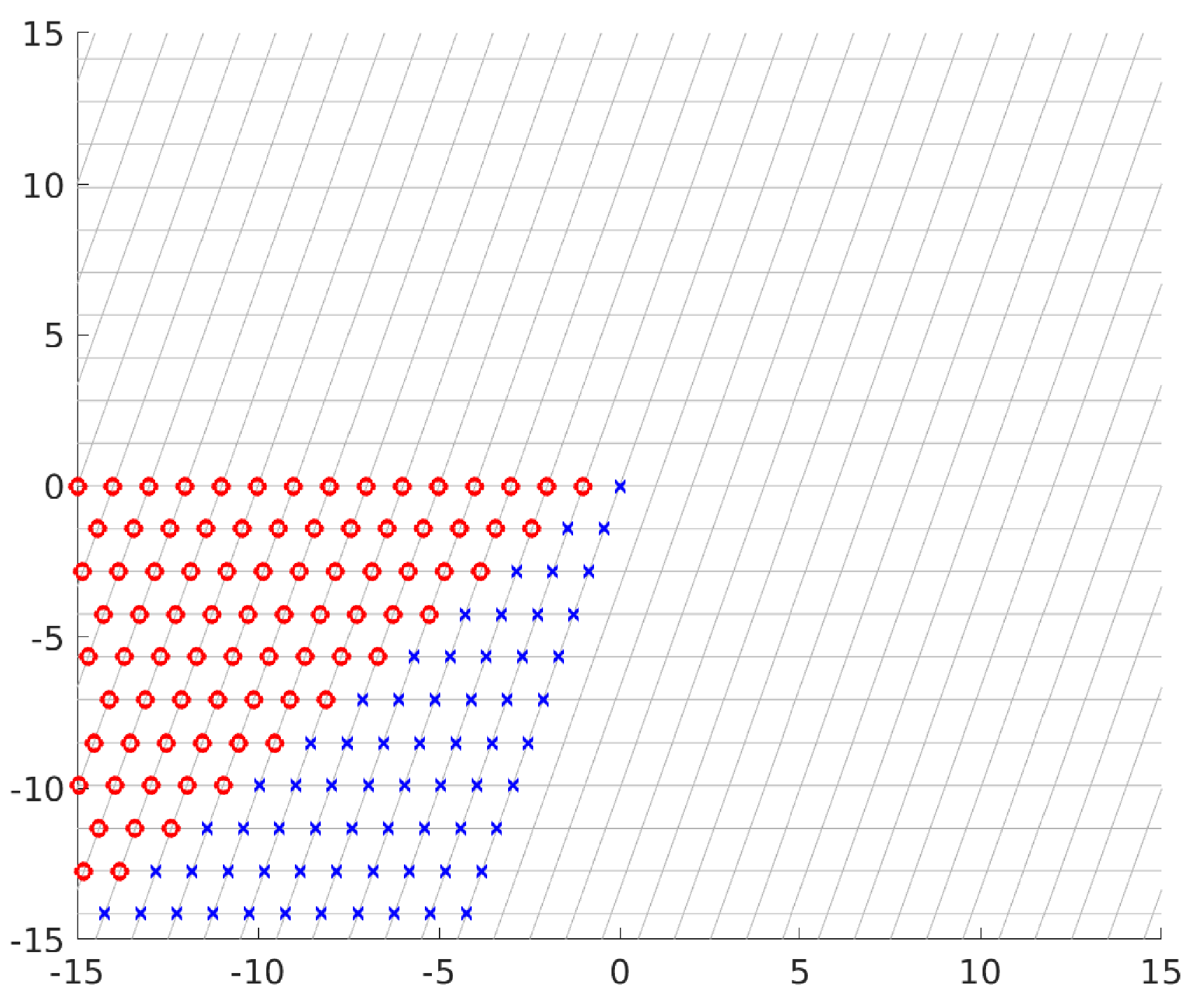}
\caption{The red circles (blue crosses) are the zeros of $G(z+1;1+\tau)$ (respectively, $G(z/\tau;1+1/\tau)$).}
 \label{fig1}
\end{figure}

\noindent 
{\it Proof of the product identity \eqref{eq:modular}.}
We consider a function
 $$
 f(z)=\frac{ G(z;\tau)}{G(z+1;1+\tau)G(z/\tau;1+1/\tau)}. 
 $$
 It is easy to check that this function is entire and zero-free: the zeros of the denominator cover the same quadrant of the lattice 
 $\{m\tau+n \; : \; m,n\le 0\}$ as the zeros of the numerator, see Figure \ref{fig1}. Since the Barnes $G$-function is an entire function of order two (this follows from the Weierstrass product representation \eqref{Weierstrass_G}), we conclude that $f(z)=\exp(A+Bz+Cz^2)$ for some constants $A,B,C$. Our aim is to prove that 
 \begin{equation}
     \label{eqn:proposed_f}
 f(z)=\Big(\frac{1+\tau}{\tau}\Big)^{\frac{z^2}{2\tau}-(1+\tau) \frac{z}{2\tau}+1}
  (2\pi)^{-\frac{z}{2\tau}}.
 \end{equation}

 There are several methods that could be used to determine the three constants $A,B,C$. The first method is based on computing the asymptotics of $\ln f(z)$ as $z\to \infty$ via \eqref{eqn_lnG_E}. The second method would be to choose $A$, $B$ and $C$ so that the function 
 $$
 g(z)=e^{A+Bz+Cz^2} G(z+1;1+\tau)G(z/\tau;1+1/\tau)
 $$
 satisfies the same functional equations \eqref{eq:funct_rel_G} as $G(z;\tau)$ and to enforce the normalization condition $g(1)=1$. Here we will follow the third method: we will compute the values of $f(0)$, $f(\tau)$ and $f(1+\tau)$ and will then check that the function given in 
\eqref{eqn:proposed_f} agrees with these values. 
 
 First, we note that  
 $G(z;\tau)=(z/\tau)(1+o(1))$ as $z\to 0$ (this follows from \eqref{Weierstrass_G}), thus we obtain 
 $f(0)=(1+\tau)/\tau$. 
Formula \eqref{eqn:G(tau;tau)} helps us compute
 $$
 f(\tau)=\Big(\frac{1+\tau}{\tau} \Big)^{\frac{1}{2}} (2\pi)^{-\frac{1}{2}}.
 $$
 Similarly, using the first formula in \eqref{eq:funct_rel_G} we find $G(1+\tau;\tau)=G(\tau;\tau)$ and we conclude that 
 $$
 f(1+\tau)=\Big(\frac{1+\tau}{\tau} \Big) (2\pi)^{-\frac{1}{2\tau}-\frac{1}{2}}. 
 $$
Now one can check that the function in the right-hand side of \eqref{eqn:proposed_f}  has the same values at  $f(0)$, $f(\tau)$ and $f(1+\tau)$ computed above. Since a polynomial of degree two is uniquely determined by its values at three distinct points, this ends the proof of 
\eqref{eq:modular}. 
\qed

\vspace{0.5cm}
\noindent 
{\it Proof of the modular transformation \eqref{eq:G_1_over_tau}.} Let $\tau \in {\mathbb C}\setminus (-\infty,0]$ be fixed. Consider the entire function
$$
f(z)=(2\pi)^{\frac{z}2 \left(1-\frac1{\tau} \right)} \tau^{\frac{z-z^2}{2\tau}+\frac{z}2-1}  G\left(\frac{z}{\tau};\frac{1}{\tau}\right), \;\;\; z\in {\mathbb C}.
$$
We will check that $f$ satisfies both functional equations \eqref{eq:funct_rel_G} and the normalization condition 
$f(1)=1$: this will imply that $f(z)=G(z;\tau)$.

Using the second identity in \eqref{eq:funct_rel_G} we obtain
\begin{align*}
    f(z+1)&=(2\pi)^{\frac{z+1}2 \left(1-\frac1{\tau} \right)} \tau^{\frac{z+1-(z+1)^2}{2\tau}+\frac{z+1}2-1}  G\left(\frac{z}{\tau}+\frac{1}{\tau};\frac{1}{\tau}\right)\\
    &=(2\pi)^{\frac{z+1}2 \left(1-\frac1{\tau} \right)} \tau^{\frac{z+1-(z+1)^2}{2\tau}+\frac{z+1}2-1}  (2\pi)^{\frac{1}{2\tau}-\frac{1}{2}} \tau^{\frac{z}{\tau}-\frac{1}{2}} \Gamma \Big(\frac{z}{\tau}\Big)G\left(\frac{z}{\tau};\frac{1}{\tau}\right)\\
    &= \Gamma \Big(\frac{z}{\tau}\Big)(2\pi)^{\frac{z}2 \left(1-\frac1{\tau} \right)} \tau^{\frac{z-z^2}{2\tau}+\frac{z}2-1}  G\left(\frac{z}{\tau};\frac{1}{\tau}\right)
    = \Gamma \Big(\frac{z}{\tau}\Big) f(z). 
\end{align*}
Similarly, using the first identity in \eqref{eq:funct_rel_G} we find
\begin{align*}
    f(z+\tau)&=(2\pi)^{\frac{z+\tau}2 \left(1-\frac1{\tau} \right)} \tau^{\frac{z+\tau-(z+\tau)^2}{2\tau}+\frac{z+\tau}2-1}  G\left(\frac{z}{\tau}+1;\frac{1}{\tau}\right)\\
    &=(2\pi)^{\frac{z+\tau}2 \left(1-\frac1{\tau} \right)} \tau^{\frac{z+\tau-(z+\tau)^2}{2\tau}+\frac{z+\tau}2-1}
    \Gamma(z)G\left(\frac{z}{\tau};\frac{1}{\tau}\right)\\
    &= (2\pi)^{\frac{\tau-1}{2}} \tau^{-z+\frac{1}{2}}\Gamma(z)  (2\pi)^{\frac{z}2 \left(1-\frac1{\tau} \right)} \tau^{\frac{z-z^2}{2\tau}+\frac{z}2-1}  G\left(\frac{z}{\tau};\frac{1}{\tau}\right)
    \\
    &= (2\pi)^{\frac{\tau-1}{2}} \tau^{-z+\frac{1}{2}}\Gamma(z)   f(z). 
\end{align*}
 Since $f$ satisfies the two functional equations 
\eqref{eq:funct_rel_G} and the normalization condition $f(1)=1$ (which follows from \eqref{eqn:G(tau;tau)}), we conclude that $f(z)=G(z;\tau)$.
\qed

\vspace{0.5cm}
\noindent 
{\it Proof of the multiplication formula \eqref{G_z_p/q_formula2}.}
We will follow a similar path as in the above proof of 
\eqref{eq:modular}. Let $\tau \in {\mathbb C}\setminus (-\infty,0]$ and $p,q \in {\mathbb N}$ be fixed. Consider the entire function
$$
h(z)=\prod\limits_{i=0}^{p-1} \prod\limits_{j=0}^{q-1} 
 \frac{G((z+i)/p+j \tau/q;\tau)}{G((1+i)/p+j \tau/q;\tau)}.
$$
The zeros of $G((z+i)/p+j \tau/q;\tau)$ are 
$z=-(j+qn)\frac{p \tau}{q}-(i+pn)$,  $m,n \ge 0$. The union of these sets as $i$ ranges from $0$ to $p-1$ and $j$ ranges from $0$ to $q-1$ is precisely the lattice $\{-m \tau - n \; : \; m,n\ge0 \}$, thus  the function $H(z)=G(z;p\tau/q)/h(z)$ has no zeros. It is also an entire function of second order, which satisfies $H(1)=1$, thus we can write 
$H(z)=\exp(B(z-1)+C(z-1)^2)$ for some constants $B$ and $C$. To find constants $B$ and $C$ we will compute $H(0)$ and $H(p \tau/q)$. 

As $z\to 0$ we have $G(z;p \tau/q)=z q/(p\tau)(1+o(1))$ and 
\begin{align*}
h(z)&=G(z/p;\tau) \times \prod\limits_{0\le i \le p-1} \prod\limits_{\substack{0\le j \le q-1 \\ i+j>0}} 
 \frac{G((z+i)/p+j \tau/q;\tau)}{G((1+i)/p+j \tau/q;\tau)}\\
 &=
z/(p\tau) (1+o(1))\prod\limits_{j=1}^{q-1} 
 \frac{G(j \tau/q;\tau)}{G(1+j \tau/q;\tau)}\\
 &=z/(p\tau) (1+o(1))\prod\limits_{j=1}^{q-1} \Gamma(j/q)^{-1}=z/(p\tau) (1+o(1)) 
 \times (2\pi)^{\frac{1-q}{2}} q^{\frac{1}{2}}.
\end{align*}
Here in the third step we used the first functional equation from \eqref{eq:funct_rel_G} and
in the last step we used the multiplication formula for the gamma function. The above results imply $H(0)=(2\pi)^{(q-1)/2} q^{1/2}$. 

Next we set $z=p \tau/q$ and compute using \eqref{eqn:G(tau;tau)}
$$
G(p \tau/q;p \tau/q)=(2\pi)^{\frac{p \tau-q}{2 q}} (p \tau/q)^{-\frac{1}{2}}. 
$$
Using the same method as above we also compute 
\begin{align*}
h( p \tau/q)&= \prod\limits_{i=0}^{p-1} \prod\limits_{j=0}^{q-1} 
 \frac{G(i/p+(j+1) \tau/q;\tau)}{G((1+i)/p+j \tau/q;\tau)}\\&=G(\tau;\tau) \prod\limits_{i=1}^{p-1} \frac{G(i/p+\tau; \tau)}{G(i/p;\tau)} \times 
 \prod\limits_{j=1}^{q-1} \frac{G(j \tau/q;\tau)}{G(1+j \tau/q;\tau)}\\
 &=(2\pi)^{(\tau-1)/2} \tau^{-1/2} \prod\limits_{i=1}^{p-1} (2\pi)^{(\tau-1)/2} \tau^{-i/p+1/2} \Gamma(i/p) \times 
 \prod\limits_{j=1}^{q-1} \Gamma(j /q)^{-1}\\&=
 (2\pi)^{(p\tau-q)/2} (p\tau/q)^{-1/2}
\end{align*}
and we conclude that $H(p\tau/q)=(2\pi)^{(p\tau-q)(1-q)/(2q)}$. Using these two values of $H(0)$ and $H(p \tau/q)$ and the fact that $H(1)=1$ we conclude that 
$$
H(z)=q^{\frac{1}{2 p \tau}(z-1)(qz-p \tau) }  (2\pi)^{-\frac{q-1}{2}(z-1)},
$$
which ends the proof of \eqref{G_z_p/q_formula2}. 
\qed

\section*{Acknowledgements}
Research  was supported by the Natural Sciences and Engineering Research Council of Canada. The authors would like to thank two anonymous referees for careful reading of the paper and for providing constructive comments and suggestions.

\section*{Disclosure statement}
No potential conflict of interest was reported by the authors.

%**************************************************************************************************
%**************************************************************************************************
%**************************************************************************************************

%\bibliographystyle{tfnlm}
%\bibliography{references}

\begin{thebibliography}{10}
\providecommand{\url}[1]{\normalfont{#1}}
\providecommand{\urlprefix}{Available from: }

\bibitem{Barnes1899}
Barnes~{\relax EW}. The genesis of the double gamma function. Proc London Math
  Soc. 1899;\hspace{0pt}31:358--381.
  \url{https://doi.org/10.1112/plms/s1-31.1.358}.

\bibitem{Barnes1901}
Barnes~{\relax EW}. The theory of the double gamma function. Phil Trans Royal
  Soc London (A). 1901;\hspace{0pt}196:265--387.
  \url{http://www.jstor.org/stable/90809}.

\bibitem{Kuznetsov2011}
Kuznetsov~A. {On extrema of stable processes}. The Annals of Probability.
  2011;\hspace{0pt}39(3):1027 -- 1060. \url{https://doi.org/10.1214/10-AOP577}.

\bibitem{KuznetsovPardo2013}
Kuznetsov~A, Pardo~JC. Fluctuations of stable processes and exponential
  functionals of hypergeometric {L}\'evy processes. Acta Appl Math.
  2013;\hspace{0pt}123:113 -- 139.
  \url{https://doi.org/10.1007/s10440-012-9718-y}.

\bibitem{Yor2009}
Nikeghbali~A, Yor~M. {The {B}arnes {$G$} function and its relations with sums
  and products of generalized {G}amma convolution variables}. Electronic
  Communications in Probability. 2009;\hspace{0pt}14:396 -- 411.
  \url{https://doi.org/10.1214/ECP.v14-1488}.

\bibitem{Simon2018}
Jedidi~W, Simon~T, Wang~M. Density solutions to a class of integro-differential
  equations. Journal of Mathematical Analysis and Applications.
  2018;\hspace{0pt}458(1):134--152.
  \url{https://doi.org/10.1016/j.jmaa.2017.08.043}.

\bibitem{Keating2000}
Keating~JP, Snaith~NC. Random matrix theory and $\zeta(1/2+it)$. Communications
  in Mathematical Physics. 2000;\hspace{0pt}214:57--89.
  \url{https://doi.org/10.1007/s002200000261}.

\bibitem{Shintani1980}
Shintani~T. {A Proof of the Classical {K}ronecker Limit Formula}. Tokyo Journal
  of Mathematics. 1980;\hspace{0pt}3(2):191 -- 199.
  \url{https://doi.org/10.3836/tjm/1270472992}.

\bibitem{Lawrie1994}
Lawrie~{\relax JB}, King~{\relax AC}. Exact solutions to a class of functional
  difference equations with application to a moving contact line flow. Euro J
  Appl Math. 1994;\hspace{0pt}5:141--147.
  \url{https://doi.org/10.1017/S0956792500001364}.

\bibitem{Michitomo_2001}
Nishizawa~M. An elliptic analogue of the multiple gamma function. Journal of
  Physics A: Mathematical and General. 2001;\hspace{0pt}34(36):7411--7421.
  \url{https://dx.doi.org/10.1088/0305-4470/34/36/320}.

\bibitem{RUIJSENAARS2000107}
Ruijsenaars~SNM. On {B}arnes' multiple zeta and gamma functions. Advances in
  Mathematics. 2000;\hspace{0pt}156(1):107--132.
  \url{https://doi.org/10.1006/aima.2000.1946}.

\bibitem{Jeffrey2007}
Zwillinger~D, Jeffrey~A. Table of integrals, series and products. 7th ed.
  Academic Press; 2007.

\bibitem{Bill1997}
Billingham~J, King~{\relax AC}. Uniform asymptotic expansions for the {B}arnes
  double gamma function. Proc R Soc Lond A. 1997;\hspace{0pt}453:1817--1829.
  \url{https://doi.org/10.1098/rspa.1997.0098}.

\bibitem{Kuznetsov2022}
Kuznetsov~A. Computing the {B}arnes {G}-function and the gamma function in the
  entire complex plane. Journal of Computational and Applied Mathematics.
  2022;\hspace{0pt}411:114270. \url{https://doi.org/10.1016/j.cam.2022.114270}.

\bibitem{Lanczos1964}
Lanczos~C. A precision approximation of the gamma function. J SIAM Numer Anal
  B. 1964;\hspace{0pt}1:86--94. \url{https://doi.org/10.1137/0701008}.

\bibitem{NIST:DLMF}
{Olver, FWJ {Olde Daalhuis}, AB Lozier, DW et al, eds}. {NIST} digital library
  of mathematical functions.\url{http://dlmf.nist.gov/}.

\bibitem{Ferreira_Lopez}
Ferreira~C, Lopez~JL. An asymptotic expansion of the double gamma function.
  Journal of Approximation Theory. 2001;\hspace{0pt}111(2):298--314.
  \url{https://doi.org/10.1006/jath.2001.3578}.

\bibitem{Reyna}
Arias-De-Reyna~J. On the theorem of {F}rullani. Proceedings of the American
  Mathematical Society. 1990;\hspace{0pt}109(1):165--175.
  \url{https://doi.org/10.2307/2048376}.

\end{thebibliography}

%**************************************************************************************************
%**************************************************************************************************
%**************************************************************************************************

\appendix
\section{Polynomials $q_n(\tau)$  for $0\le n \le 21$}\label{AppendixA}

%**************************************************************************************************
%**************************************************************************************************
%**************************************************************************************************

The odd-numbered polynomials $q_n$ with $n=1,3,\dots,21$ are
\begin{alignat*}{4}
&q_1(\tau)=-\frac{1}{2}(1+\tau), \qquad \qquad \;\;\;  &&q_3(\tau)=-\frac{1}{4}\tau(1+\tau),\\
&q_5(\tau)=\frac{1}{12}\tau(1+\tau^3), &&q_7(\tau)=-\frac{1}{12}\tau(1+\tau^5), \\& q_9(\tau)=\frac{3}{20}\tau(1+\tau^7),
&&q_{11}(\tau)=-\frac{5}{12}\tau(1+\tau^9), \\
&q_{13}(\tau)=\frac{691}{420} \tau (1+\tau^{11}), && q_{15}(\tau)=-\frac{35}{4} \tau(1+\tau^{13}),\\  & q_{17}(\tau)=\frac{3617}{60} \tau(1+\tau^{15}),  && q_{19}(\tau)=-\frac{43867}{84} \tau(1+\tau^{17}), \\
& q_{21}(\tau)=\frac{1222277}{220} \tau(1+\tau^{19}).  &&
\end{alignat*}

The even-numbered polynomials $q_n$ with $n=0,2,\dots,20$ are
\begin{align*}
q_0(\tau)&=1, \\
q_2(\tau)&=\frac{1}{6}(1+3\tau+\tau^2),\\
q_4(\tau)&=-\frac{1}{30}(1-5\tau^2+\tau^4), \\
q_6(\tau)&=\frac{1}{84}(2-7 \tau^2-7 \tau^4+2\tau^6), \\
q_8(\tau)&=-\frac{1}{90}(3-10 \tau^2-7 \tau^4-10\tau^6+3\tau^8), \\
q_{10}(\tau)&=\frac{1}{132}(10-33\tau^2-22 \tau^4-22 \tau^6-33 \tau^8+10 \tau^{10}), \\
q_{12}(\tau)&=-\frac{1}{5460} (1382 - 4550\tau^2 - 3003 \tau^4 - 2860 \tau^6 - 3003 \tau^8 - 4550 \tau^{10} + 1382 \tau^{12}), \\
q_{14}(\tau)&=\frac{1}{180} 
(210 - 691 \tau^2 - 455 \tau^4 - 429 \tau^6 - 429 \tau^8 - 455 \tau^{10} - 691 \tau^{12} + 210 \tau^{14}),\\
q_{16}(\tau)&=-\frac{1}{1530} 
(10851 - 35700\tau^2 - 23494\tau^4 - 22100\tau^6 - 21879\tau^8 - 22100\tau^{10} \\
&\qquad \qquad \qquad \qquad \qquad \qquad  - 23494\tau^{12} - 35700\tau^{14}  
+10851\tau^{16}),\\
q_{18}(\tau)&=\frac{1}{7980} 
(438670 - 1443183\tau^2 - 949620\tau^4 - 892772\tau^6 - 881790\tau^8 - 881790\tau^{10} \\
&\qquad \qquad \qquad - 892772\tau^{12}  - 949620\tau^{14} - 1443183\tau^{16}+438670\tau^{18}),\\
q_{20}(\tau)&=-\frac{1}{13860} 
(7333662 - 24126850\tau^2 - 15875013\tau^4 - 14922600\tau^6 - 14730738\tau^8 \\  & \qquad \qquad \qquad
 - 14696500\tau^{10} - 14730738\tau^{12} - 14922600\tau^{14}- 15875013\tau^{16} \\
&  \qquad \qquad \qquad  \qquad \qquad - 24126850\tau^{18} + 7333662\tau^{20}).
\end{align*}

%**************************************************************************************************
%**************************************************************************************************
%**************************************************************************************************
\end{document}